\documentclass{siamltex}
\usepackage{amsmath}
\usepackage{amssymb}
\usepackage{graphicx,color}
\usepackage{units}

\parskip 1ex
\parindent 0ex


\def\storage{c_s}
\def\biotwillis{\alpha}
%

\newcommand{\Hdiv}{H^{\text{div}}}
\newcommand{\Vdisp}{\mathbf V}
\newcommand{\disp}{\mathbf u}
\newcommand{\disptest}{\mathbf v}
\newcommand{\bcdispD}{\disp\mathrm{D}}
\newcommand{\bcdispN}{\disp\mathrm{N}}
\newcommand{\Vdarcy}{\mathbf W}
\newcommand{\darcy}{\mathbf w}
\newcommand{\darcytest}{\mathbf z}
\newcommand {\DD}{\varepsilon}
\renewcommand{\div}{\nabla\!\cdot\!}

\newcommand{\bfA}{{\bf A}}

\newcommand{\bfE}{{\bf E}}

\newcommand{\bfI}{{\bf I}}

\newcommand{\bfeta}{{\mbox{\boldmath $\eta$}}}

\newcommand{\bfsigma}{{\mbox{\boldmath $\sigma$}}}

\newcommand{\bftau}{{\mbox{\boldmath $\tau$}}}

\newcommand{\bfphi}{{\mbox{\boldmath $\phi$}}}

\newcommand{\bfchi}{{\mbox{\boldmath $\chi$}}}

\newcommand{\bff}{{\bf f}}

\newcommand{\bfn}{{\bf n}}

\newcommand{\bfq}{{\bf q}}

\newcommand{\bfx}{{\bf x}}

\newcommand{\Th}{\mathbb T}

\newenvironment{theorem*}{{\bf Theorem}\em}{\rm\mbox{}}
\newtheorem{assumption}[theorem]{Assumption}
\newtheorem{prop}[theorem]{Proposition}
\newtheorem{remark}[theorem]{Remark}

\begin{document}

\bibliographystyle{siam}

\title{A finite element method with strong mass conservation for Biot's linear consolidation model}

\author{Guido Kanschat\thanks{Interdisciplinary Center for Scientific Computing (IWR), Heidelberg University, Heideberg, Germany.}\and
  Beatrice Riviere\thanks{Department of Computational and Applied
    Mathematics, Rice University, Houston, TX 77005. }}

\date{\today}
\maketitle

\begin{abstract}
An H(div) conforming finite element method for solving the linear Biot equations is analyzed. 
Formulations for the standard mixed method are combined with
formulation of interior penalty discontinuous Galerkin method to obtain a consistent scheme.
Optimal convergence rates are obtained.
\end{abstract}
%

\section{Introduction}

In this article, we present a new finite element discretization of a
linear model for poroelasticity~\cite{Biot41}. The main features of
our approach are a consistent coupling of fluid and solid velocity
without projection and consistent approximation rates for both
velocity fields and the fluid pressure. Thus, our scheme is robust
with respect to fluid and solid compressibility manifested by the
storage coefficient $\storage$ and the Biot-Willis constant
$\biotwillis$, and we obtain optimal convergence rates in $L^2$ for
pressure and velocity. We achieve this by using a standard mixed
formulation based on $\Hdiv$-conforming finite element spaces with
matching pressure for the fluid velocity and by using the same vector
space combined with discontinuous Galerkin flux terms for
$H^1$-consistency.

Already in 1994, Murad and Loula~\cite{muradloula} analyze the case
with $\storage = 0$ (incompressible fluid). They use $H^1$-conforming
finite elements for displacement and fluid pressure, and obtain
estimates of Taylor-Hood type, that is, for pressure shape functions
of degree $k-1$ and displacement of degree $k$, they have balanced
approximation in $L^2$ for strain and pressure of order
$h^k$. Assuming additional regularity, duality yields that the
displacement converges of order $h^{k+1}$, while by taking
derivatives, the seepage velocity is of order $h^{k-1}$. It is this
gap in approximation, we are overcoming with our method.

In~\cite{OyarzuaRuizBaier16}, Oyarzua and Ruiz-Baier introduce
a ``total'' pressure $\phi = p-\lambda \div u$ in order to treat the
coupling between solid and fluid in a more robust way. Since they
compute the pressure $p$ as well, this amounts to adding a variable
for the dilation $\div u$. They obtain for a Taylor-Hood approximation
of degree $k/k-1$ of the displacement/total pressure pair and a
pressure approximation of degree $k$ an energy estimate of order $k$
involving $H^1$-norms of the displacement and pressure and the
$L^2$-norm of the total pressure. Thus, assuming elliptic regularity,
the $L^2$-error of the displacement is only one order better than that
of the fluid velocity.
A similar gap can be observed in \cite{yi2013coupling}, where the
error of the displacement gradient is in balance with the seepage
velocity. The discretization there is more similar to ours though,
since it uses Raviart-Thomas elements for the seepage
velocity. Different from here, a nonconforming element is used there
for the solid displacement. Estimates of the same kind were obtained
in~\cite{phillips2007coupling,phillips2008coupling} for continuous and
discontinuous Galerkin approximation of the solid displacement,
respectively, but under the restrictive assumption $\storage>0$, which
excludes incompressible fluids.


 
In~\cite{yi2014convergence}, a mixed method involving discretization
of pressure, seepage velocity, ``total'' stress, and displacement is
used. The finite elements are Raviart-Thomas pairs for velocity and
pressure and Arnold-Winther pairs for stress and displacement. It is
to our knowledge the only other result which produces equal order
approximation in $L^2$ for velocity and displacement, if matching
polynomial degrees are chosen. Compared to the method proposed here,
introducing a discretization of the total stress increases the number
of degrees of freedom considerably. In addition, the optimal error
estimate in $L^\infty(L^2)$ there is only obtained for $\storage>0$,
while it deteriorates to $L^2(L^2)$ for $\storage=0$, while our
analysis does not suffer from this problem and holds in every
timestep. Finally, robustness with respect to all involved parameters
of discretizations based on Raviart-Thomas pairs is discussed
in~\cite{HongKraus17}, and their analysis of the system to be solved
in a single time step applies to our method as well. Not using our
assumption~\ref{ass:hdiv} below, they choose the order of the displacement
space higher than that of the seepage velocity space.




The remainder of this article is organized as follows: in
Section~\ref{sec:model-problem}, we denote Biot's consolidation
equations in displacement, pressure, and seepage velocity
variables. Then, in section~\ref{sec:cont-time-scheme}, we state a
semidiscrete scheme and present its error analysis in
Section~\ref{sec:priori-error-estim}. A simple time discretization and
its analysis are provided in Section~\ref{sec:discrete-time-scheme},
and we conclude with numerical tests in Section~\ref{sec:numer-exper}.

\section{Model problem}
\label{sec:model-problem}

The linear Biot system coupling the deformation $\disp$ of the porous media, the fluid pressure $p$, and the discharge or seepage velocity $\darcy$ of the fluid is written as:
\begin{xalignat}2
  \frac{\partial}{\partial t} (\storage p + \biotwillis  \div \disp) +\div \darcy &= f_1,
  &\text{in }& \Omega\times (0,T),
  \label{eq:pdemass}\\
  K^{-1} \darcy &= -\nabla p,&\text{in }& \Omega\times (0,T),
\label{eq:pdedarcy}\\
-\div (\bfsigma -\biotwillis  p \bfI) &= \bff_2,& \text{in }& \Omega\times (0,T).
\label{eq:momentum}
\end{xalignat}
The constant $\biotwillis $ is called the Biot-Willis
constant~\cite{BiotWillis1957}, which represents unaccounted volume
changes due to a third phase, for instant small air inclusions in
soil.  It takes a value very close to one.  The constant $\storage$
represents the constrained specific storage coefficient (see
\cite{showalter2010poroelastic} and references therein) and is related
to compressibility of the fluid. Therefore, it is close to zero in
many applications.  The permeability $K$ is a symmetric positive
definite matrix.  We assume here that the effective stress tensor
satisfies Hooke's law:
\begin{gather*}
  \bfsigma = \lambda (\div \disp) \bfI + 2\mu \DD(\disp),  
\end{gather*}
where
\begin{gather*}
  \DD(\disp) = \frac12 (\nabla \disp + (\nabla \disp)^T).  
\end{gather*}

The system is completed by initial conditions
\begin{gather}
  \label{eq:initial}
  p(0) = p_0,\quad\disp(0) = \disp_0 \quad \text{in} \quad \Omega.  
\end{gather}
such that equation~\eqref{eq:momentum} is satisfied at $t=0$. No
initial condition on $\darcy$ is required since it is only coupled
algebraically. In practice, the initial pressure is experimentally
measured and the displacement $\disp_0$ is obtained by
solving~\eqref{eq:momentum}.

The boundary of the domain is decomposed into two pairs of disjoint sets:
\[
\partial\Omega = \Gamma_{p\mathrm{D}} \cup\Gamma_{p\mathrm{N}}
 = \Gamma_{\bcdispD} \cup\Gamma_{\bcdispN},
\]
with
\[
\Gamma_{p\mathrm{D}} \cap\Gamma_{p\mathrm{N}}
 = \Gamma_{\bcdispD} \cap\Gamma_{\bcdispN} = \emptyset.
\]
We prescribe the pressure and velocity on the boundary
\begin{eqnarray}
p = p_\mathrm{D}, \quad \mbox{on} \quad\Gamma_{p\mathrm{D}},\\
\darcy \cdot\bfn = 0,\quad \mbox{on}\quad\Gamma_{p\mathrm{N}},
\end{eqnarray}
and we prescribe the displacement and total normal stress
\begin{eqnarray}
\disp = \disp_{\mathrm{D}}, \quad \mbox{on} \quad\Gamma_{\bcdispD},\\
(\bfsigma -\biotwillis  p \bfI)\bfn = \bfsigma_{\mathrm{N}}, \quad \mbox{on}\quad\Gamma_{\bcdispN}.
\label{eq:dispbcN}
\end{eqnarray}
Throughout the paper, the unit normal (resp. tangential) vector to the boundary $\partial\Omega$ is denoted
by $\bfn$ (resp. $\bftau$).
We remark that the boundary condition $\darcy\cdot\bfn = 0$ can be
changed to the inhomogeneous boundary condition $\darcy\cdot\bfn = g$.
In that case, the datum $g$ needs to be lifted following a standard
technical argument. Furthermore, the deformation may admit more complex
boundary conditions, see for instance the numerical experiments. We
make the following assumptions:
\begin{enumerate}
\item Neither $\Gamma_{pN} = \partial\Omega$, nor is $\disp\cdot\bfn$
  prescribed on the whole boundary. This is a technical assumption
  which guarantees that neither $\div \darcy$, nor $\div \disp$ are
  forced to have mean value zero.
\item The boundary condition on $\disp$ itself is sufficient to admit
  Korn's inequality
  \begin{gather*}
    \Vert \nabla \disp \Vert_\Omega \le C \Vert \epsilon(\disp) \Vert_\Omega,
  \end{gather*}
where $\Vert \cdot\Vert_\Omega$ denotes the $L^2$ norm over $\Omega$.
  In particular, the boundary conditions must exclude solid translations and
  rotations of the whole domain.
\end{enumerate}

\section{Continuous-in-time Scheme}
\label{sec:cont-time-scheme}

Let $\Th_h$ be a shape regular family of
conforming subdivisions of $\Omega$ into simplices, parallelograms or
parallelepipeds.  Denote by $h_T$ the diameter of an element $T$ and
denote by $h$ the maximum diameter over all mesh elements. Denote by
$\Gamma_i$ the set of faces that are interior to $\Omega$.  For all
$t\geq 0$, we seek a solution $(p_h,\darcy_h,\disp_h)$ in
$Q_h\times\Vdarcy_h\times\Vdisp_h$.  The pair $(\Vdarcy_h,Q_h)$ is the
usual pair of a divergence-conforming velocity space
$\Vdarcy_h\subset \Hdiv_{0,\Gamma_{p\mathrm{N}}}(\Omega)$ and its
corresponding pressure space $Q_h\subset L^2(\Omega)$.  We denote
\[
\Vdarcy = \Hdiv_{0,\Gamma_{p\mathrm{N}}}(\Omega)
= \{ \darcytest \in \Hdiv(\Omega): \, \darcytest\cdot\bfn = 0
\mbox{ on } \Gamma_{p\mathrm{N}}\},
\]
and
\[
\Vdisp = \{\disptest \in \Hdiv(\Omega): \, \disptest\cdot\bfn = {\bf 0} \mbox{ on } \Gamma_{\bcdispD}\}. 
\]
We use the notation $\Vert \cdot\Vert_{\mathcal{O}}$ for
the $L^2$ norm on any domain $\mathcal{O}$. The $L^2$ inner-product on $\mathcal{O}$ is denoted by
$(\cdot,\cdot)_{\mathcal{O}}$.
The space $\Vdisp_h$ is a finite-dimensional subspace of $\Vdisp \cap H^1(\Th_h)$, where
$H^1(\Th_h)$ is the broken Sobolev space.
We denote by $k$ the polynomial degree for the space $Q_h$.  The space
$\Vdarcy_h$ only differs from $\Vdisp_h$ by the location of the boundary conditions, and
thus has the same order as $\Vdisp_h$.
We also assume that the spaces $\Vdarcy_h$ and $Q_h$ satisfy:
\begin{equation}
\div\Vdarcy_h = Q_h.
\label{eq:pressurespace}
\end{equation}
Therefore we also have 
\begin{equation}
\div\Vdisp_h = Q_h.
\label{eq:Vhpressurecond}
\end{equation}

Next we introduce an approximation operator $\pi_h$ satisfying for all $\darcytest\in \Vdarcy +\Vdisp$
\begin{eqnarray}
(\div \pi_h(\darcytest),q) = (\div\darcytest,q),\quad \forall q\in Q_h,\label{eq:prop1}\\
\Vert \pi_h(\darcytest)-\darcytest\Vert_{H^r(T)} \leq C h_T^{k+1-r} \vert \darcytest \vert_{H^{k+1}(T)},
\quad \forall T\in\Th_h,\quad 0\leq r\leq k,
\\
\Vert \div(\pi_h(\darcytest)-\darcytest))\Vert_{L^2(T)} \leq C h_T^{k+1} \vert \div\darcytest \vert_{H^{k+1}(T)},
\quad \forall T\in\Th_h.
\end{eqnarray}
Since the spaces $\Vdarcy$ and $\Vdisp$ differ because of the location of the boundary conditions, we also require that
\begin{eqnarray*}
\forall \darcytest\in\Vdarcy,\quad\pi_h(\darcytest)\in\Vdarcy_h,
\\
\forall \darcytest\in\Vdisp,\quad\pi_h(\darcytest)\in\Vdisp_h.
\end{eqnarray*}
We now introduce jump $[\cdot]$ and average $\{\cdot\}$ of a scalar function 
$\phi$ across an interior face $F$.  We first associate with each face $F$ in $\Gamma_i$
a unit normal vector $\bfn_F$, and we denote by $T_-$ and $T_+$ the elements that share $F$, such that $\bfn_F$ points from $T_-$ to $T_+$. We then define
\[
[\phi] = \phi|_{T_-} - \phi|_{T_+}, \quad
\{\phi\} = \frac12 (\phi_{T_-}+\phi_{T_+}).
\]
Jump and average of vector function $\bfphi$ are defined component-wise.
The $L^2$ inner-product on an open domain $\mathcal{O}$ is denoted by
$(\cdot,\cdot)_{\mathcal{O}}$. We will also use the following notation
for the inner-products on elements and faces:
\[
(\phi,\psi)_{\Th_h} = \sum_{T\in\Th_h} (\phi,\psi)_T,\quad
(\phi,\psi)_{\Gamma_i} = \sum_{F\in\Gamma_i} (\phi,\psi)_F,\quad
\]
\[
(\phi,\psi)_{\Gamma_{\bcdispD}} = \sum_{F\in\Gamma_{\bcdispD}} (\phi,\psi)_F,\quad
(\phi,\psi)_{\Gamma_{\bcdispN}} = \sum_{F\in\Gamma_{\bcdispN}} (\phi,\psi)F.
\]
The discretization of the operator $-2 \div \epsilon(\disp)$ in the
nonconforming space $\Vdisp$ follows the interior penalty (SIPG)
method \cite{Arnold,Riviere2008} with the mesh dependent form:
\begin{multline*}
d_h(\disp,\disptest) = 2 (\DD(\disp),\DD(\disptest))_{\Th_h}
+\frac{\gamma}{h} ([\disp],[\disptest])_{\Gamma_i}
\\
-2 (\{ \DD(\disp)\bfn_F\}, [\disptest])_{\Gamma_i}
-2  (\{ \DD(\disptest)\bfn_F\}, [\disp])_{\Gamma_i}
\\
+\frac{\gamma}{h} (\disp,\disptest)_{\Gamma_{\bcdispD}}
-2 (\DD(\disp)\bfn, \disptest)_{\Gamma_{\bcdispD}}
-2  (\DD(\disptest)\bfn,\disp)_{\Gamma_{\bcdispD}}
,\quad
\forall \disp,\disptest\in\Vdisp.
\end{multline*}
The parameter $\gamma>0$ is the penalty parameter, chosen large enough to ensure coercivity of the bilinear form $d_h(\cdot,\cdot)$.
Since the space $\Vdisp$ is $\Hdiv$-conforming, no penalty formulation
for the term $\nabla \div \disp$ is needed. Accordingly, we define the bilinear form
\[
  a_h(\disp,\disptest) = \mu d_h(\disp,\disptest)
  + \lambda (\nabla \cdot \disp, \nabla \cdot\disptest)_\Omega,\quad
  \forall \disp,\disptest\in\Vdisp.
\]
From the equalities of these spaces, we immediately deduce the inf-sup conditions
in~\cite{BoffiBrezziFortin13} and~\cite{HansboLarson02,SchoetzauSchwabToselli03}:
\begin{alignat}6
  \forall& q\in Q_h 
  &\;\,\exists &\darcytest\in \Vdarcy_h
  & \;:\quad&
  &\div\darcytest&=q
  &\quad\wedge\quad&
  &\Vert \darcytest \Vert_{\Hdiv(\Omega)} &\le \frac1{\beta_{\Vdarcy}}\Vert q \Vert_\Omega
  \label{eq:inf-sup1}
  \\
  \forall& q\in Q_h 
  &\;\,\exists &\disptest\in \Vdisp_h
  & \;:\quad&
  &\div\disptest&=q
  &\quad\wedge\quad&
  &\Vert \disptest \Vert_{1,h} &\le \frac1{\beta_{\Vdisp}}\Vert q \Vert_\Omega
  \label{eq:inf-sup2}
\end{alignat}

The semi-discrete scheme is: for all $t>0$ find $(p_h(t),\darcy_h(t),\disp_h(t))\in Q_h\times\Vdarcy_h\times\Vdisp_h$ such that
\begin{subequations}
  \label{eq:contscheme}
\begin{xalignat}2
  \bigl(\partial_t (\storage p_h+\biotwillis \div\disp_h),q\bigr)_\Omega
  +\bigl(\div \darcy_h,q\bigr)_\Omega
  &= \bigl(f_1,q\bigr)_\Omega,
  & \forall& q\in Q_h,
  \label{eq:contscheme1}
  \\
  \bigl(K^{-1}\darcy_h,\darcytest\bigr)_\Omega
  - \bigl(p_h,\div \darcytest\bigr)_\Omega
  &=- (p_{\mathrm{D}},\darcytest\cdot\bfn)_{\Gamma_{p\mathrm{D}}},
  &\forall&\darcytest\in\Vdarcy_h,
  \label{eq:contscheme2}
  \\
  a_h(\disp_h,\disptest)
  - \biotwillis  \bigl(p_h, \div \disptest\bigr)_\Omega
  &= \mathcal R(\disptest),
 &\forall& \disptest\in\Vdisp_h,
\label{eq:contscheme3}
\end{xalignat}
where
\begin{gather*}
  \mathcal R(\disptest) = 
   (\bff_2,\disptest)_\Omega
   + (\bfsigma_{\mathrm{N}},\disptest)_{\Gamma_{\bcdispN}}
  -2\mu (\DD(\disptest)\bfn,\disp_{\mathrm D})_{\Gamma_{\bcdispD}}
  +\frac{\gamma}{h} (\disp_{\mathrm D},\disptest)_{\Gamma_{\bcdispD}}.
\end{gather*} 
We have the following initial conditions for pressure $p_h(0)\in Q_h$ and for displacement $\disp_h(0)\in \Vdisp_h$
\begin{gather}
  \label{eq:initialdisp}
  \begin{alignedat}{2}
    (p_h(0),q) &= (p_0,q),\qquad&\forall& q\in Q_h,\\
    a_h(\disp_h(0),\disptest) &= a_h(\disp_0,\disptest),    
    \qquad& \forall& \disptest\in\Vdisp_h.
  \end{alignedat}
\end{gather}
\end{subequations}

We first note that the scheme (\ref{eq:contscheme1}--d) is consistent:
\begin{lemma}
  Let $(p,\disp,\darcy)$ be the solution to
  \eqref{eq:pdemass}-\eqref{eq:dispbcN}, and assume 
  $\disp(t)\in H^{3/2+\epsilon}(\Omega)$ for all $t$ and for some positive $\epsilon$. Then,
  it satisfies the equations
  \eqref{eq:contscheme1}-\eqref{eq:initialdisp}.
\end{lemma}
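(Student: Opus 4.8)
The plan is to verify consistency equation by equation, by testing the strong PDEs against the discrete test functions and integrating by parts in a piecewise fashion, tracking all boundary and interface contributions. Since the theorem asserts that the \emph{exact} solution satisfies the discrete variational equations \eqref{eq:contscheme1}--\eqref{eq:initialdisp}, the strategy is to start from the residual form of each discrete equation evaluated at $(p,\disp,\darcy)$ and show it vanishes.

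First I would handle \eqref{eq:contscheme1}: since $Q_h\subset L^2(\Omega)$ consists of functions on which no interface terms act, multiply \eqref{eq:pdemass} by $q\in Q_h$ and integrate over $\Omega$; this gives $(\partial_t(\storage p+\biotwillis\div\disp),q)_\Omega+(\div\darcy,q)_\Omega=(f_1,q)_\Omega$ directly, with no integration by parts needed. Second, for \eqref{eq:contscheme2}, multiply \eqref{eq:pdedarcy} by $\darcytest\in\Vdarcy_h$ and integrate; integrating $(\nabla p,\darcytest)_\Omega$ by parts over $\Omega$ (valid since $\darcytest\in\Hdiv(\Omega)$ has a well-defined normal trace and $p\in H^1$) produces $-(p,\div\darcytest)_\Omega+(p,\darcytest\cdot\bfn)_{\partial\Omega}$; the boundary integral splits over $\Gamma_{p\mathrm D}$ and $\Gamma_{p\mathrm N}$, with the $\Gamma_{p\mathrm N}$ part killed by $\darcytest\cdot\bfn=0$ and the $\Gamma_{p\mathrm D}$ part giving $(p_{\mathrm D},\darcytest\cdot\bfn)_{\Gamma_{p\mathrm D}}$ by the Dirichlet condition. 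No interface terms appear because $\darcytest$ is $\Hdiv$-conforming, so its normal component is single-valued across interior faces.

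The substantive step is \eqref{eq:contscheme3}. Here I would test \eqref{eq:momentum} against $\disptest\in\Vdisp_h\subset\Vdisp\cap H^1(\Th_h)$, which is \emph{not} globally $H^1$, so the integration by parts must be done element by element. Writing $\bfsigma-\biotwillis p\bfI=\lambda(\div\disp)\bfI+2\mu\DD(\disp)-\biotwillis p\bfI$, the $\lambda$ and $\biotwillis p$ terms are handled with $\Hdiv$-conformity of $\disptest$ (single-valued normal trace, so element boundary terms telescope to $\partial\Omega$ only, contributing $\lambda(\div\disp,\div\disptest)_\Omega$, the total-stress Neumann data on $\Gamma_{\bcdispN}$, and zero on $\Gamma_{\bcdispD}$ since $\disptest\cdot\bfn=0$ there). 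The genuinely delicate piece is $-2\mu(\div\DD(\disp),\disptest)_{\Th_h}$: integrating by parts on each $T$ gives $2\mu(\DD(\disp),\nabla\disptest)_T-2\mu(\DD(\disp)\bfn_T,\disptest)_{\partial T}$; using symmetry of $\DD(\disp)$ one replaces $\nabla\disptest$ by $\DD(\disptest)$, and summing over elements the face contributions reorganize using the identity $[\phi\psi]=[\phi]\{\psi\}+\{\phi\}[\psi]$, so that the regularity $\disp\in H^{3/2+\epsilon}$ makes $\DD(\disp)\bfn_F$ single-valued (hence $\{\DD(\disp)\bfn_F\}=\DD(\disp)\bfn_F$ and $[\DD(\disp)\bfn_F]=0$) and $[\disp]=0$ since the exact $\disp$ is continuous. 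This is exactly why the $H^{3/2+\epsilon}$ hypothesis is needed: it guarantees the normal stress trace exists on faces and is continuous. After this, the interior-face and $\Gamma_{\bcdispD}$-face terms of $d_h$ evaluated at the exact solution collapse: all jump terms $[\disp]$, $(\disp-\disp_{\mathrm D})|_{\Gamma_{\bcdispD}}$, and the term $-2(\{\DD(\disptest)\bfn_F\},[\disp])_{\Gamma_i}$ vanish, while $-2(\DD(\disptest)\bfn,\disp)_{\Gamma_{\bcdispD}}$ and $\frac{\gamma}{h}(\disp,\disptest)_{\Gamma_{\bcdispD}}$ are precisely matched by the corresponding terms in $\mathcal R(\disptest)$ with $\disp_{\mathrm D}$ in place of $\disp$. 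Collecting everything reproduces $a_h(\disp,\disptest)-\biotwillis(p,\div\disptest)_\Omega=\mathcal R(\disptest)$.

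Finally, the initial conditions \eqref{eq:initialdisp} hold trivially: the first is the $L^2(\Omega)$-projection identity with $p(0)=p_0$, and the second follows because \eqref{eq:momentum} is assumed to hold at $t=0$, so the same argument as for \eqref{eq:contscheme3} at $t=0$ shows $a_h(\disp(0),\disptest)=a_h(\disp_0,\disptest)$ once one notes $\disp(0)=\disp_0$ from \eqref{eq:initial}. The main obstacle, and the only place any care is required, is the element-wise integration by parts of the symmetric-gradient term and the justification that all interface and Dirichlet-boundary penalty and consistency terms of $d_h$ vanish on the exact solution; this is where the regularity assumption $\disp\in H^{3/2+\epsilon}(\Omega)$ enters, to make the face traces of $\DD(\disp)$ meaningful and single-valued.
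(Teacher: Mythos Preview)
Your proposal is correct and follows essentially the same approach as the paper: the conforming mixed discretization makes \eqref{eq:contscheme1}--\eqref{eq:contscheme2} immediate, and for \eqref{eq:contscheme3} the element-wise integration by parts with the regularity $\disp\in H^{3/2+\epsilon}(\Omega)$ ensuring well-defined single-valued traces of $\DD(\disp)\bfn_F$ is exactly the mechanism behind the cited result in~\cite[Lemma~2.1]{RiviereShawWheelerWhiteman2003}, combined with the observation that $\Vdisp_h\subset\Hdiv(\Omega)$ so the $(\lambda\div\disp-\biotwillis p)\bfI$ contributions produce no interface terms. The only difference is that the paper outsources the DG consistency calculation to a reference, whereas you carry it out explicitly.
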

\begin{proof}
  The consistency of equation~\eqref{eq:contscheme} without the
  pressure term for solutions $u\in H^{3/2+\epsilon}(\Omega)$ of
  equation~\eqref{eq:momentum} was established in~\cite[Lemma
  2.1]{RiviereShawWheelerWhiteman2003}. Since
  $\Vdisp_h \subset \Hdiv(\Omega)$, the discretization of
  $(p_h, \div\disptest)$ is conforming. Thus, we obtain
  consistency of the momentum equation~\eqref{eq:contscheme3}
  with~\eqref{eq:momentum} and of the compatibility
  condition~\eqref{eq:initialdisp} with~\eqref{eq:initial}.

  The mixed finite element discretization (\ref{eq:contscheme}a--b) of
  \eqref{eq:pdemass}, \eqref{eq:pdedarcy}  is conforming and thus
  straightforward~\cite{BoffiBrezziFortin13}.
\end{proof}

We next state the coercivity of the bilinear form $d(\cdot,\cdot)$, the
proof of which depends on Korn's inequality for discontinuous
spaces~\cite{Brenner04Korn} and can be found in \cite{HansboLarson02}
\begin{lemma}
Assume $\gamma$ is large enough. There is a positive constant $\kappa$ independent of $h$ (and $\lambda, \mu, \biotwillis , \storage$) such that:
\begin{equation}\label{eq:acoer}
\kappa \Vert \disptest_h\Vert_{1,h}^2 \leq d_h(\disptest_h,\disptest_h),\quad
\forall \disptest_h\in\Vdisp_h.
\end{equation}
\label{lem:coerc}
\end{lemma}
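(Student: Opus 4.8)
The plan is to carry out the standard symmetric interior penalty coercivity estimate and then close the argument with a Korn inequality for broken spaces. Setting $\disp=\disptest=\disptest_h$ in the definition of $d_h$, the consistency terms combine in pairs, leaving
\begin{multline*}
d_h(\disptest_h,\disptest_h)
= 2\Vert\DD(\disptest_h)\Vert_{\Th_h}^2
+\frac{\gamma}{h}\Vert[\disptest_h]\Vert_{\Gamma_i}^2
+\frac{\gamma}{h}\Vert\disptest_h\Vert_{\Gamma_{\bcdispD}}^2
\\
-4\bigl(\{\DD(\disptest_h)\bfn_F\},[\disptest_h]\bigr)_{\Gamma_i}
-4\bigl(\DD(\disptest_h)\bfn,\disptest_h\bigr)_{\Gamma_{\bcdispD}}.
\end{multline*}
All terms but the two on the last line already have the correct sign, so only those need to be controlled.

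First I would bound the interior-face term: on a face $F$ shared by $T_-$ and $T_+$, Cauchy--Schwarz gives $|(\{\DD(\disptest_h)\bfn_F\},[\disptest_h])_F|\le\Vert\{\DD(\disptest_h)\bfn_F\}\Vert_F\,\Vert[\disptest_h]\Vert_F$, and since $\disptest_h$ is piecewise polynomial on a shape-regular mesh, the discrete trace (inverse) inequality gives $\Vert\DD(\disptest_h)\Vert_F\le C_{\mathrm{tr}}h_T^{-1/2}\Vert\DD(\disptest_h)\Vert_T$ for each adjacent $T$. Using Young's inequality with a parameter $\delta>0$, I would absorb $\Vert[\disptest_h]\Vert_F^2$ into the penalty term $\tfrac{\gamma}{h}\Vert[\disptest_h]\Vert_{\Gamma_i}^2$ and $\Vert\DD(\disptest_h)\Vert_T^2$ into $2\Vert\DD(\disptest_h)\Vert_{\Th_h}^2$, summing over faces with the help of the mesh-independent bound on the number of faces per element; the Dirichlet boundary term is handled identically. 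This gives an estimate of the shape
\begin{equation*}
d_h(\disptest_h,\disptest_h)
\ge(2-C\delta)\Vert\DD(\disptest_h)\Vert_{\Th_h}^2
+\Bigl(\gamma-\frac{C}{\delta}\Bigr)\frac1h\Bigl(\Vert[\disptest_h]\Vert_{\Gamma_i}^2+\Vert\disptest_h\Vert_{\Gamma_{\bcdispD}}^2\Bigr).
\end{equation*}
Choosing $\delta$ small and then $\gamma$ large enough (both independent of $h$, and of $\lambda,\mu,\biotwillis,\storage$, which do not appear in $d_h$ at all) produces a lower bound $c_0\bigl(\Vert\DD(\disptest_h)\Vert_{\Th_h}^2+\tfrac1h\Vert[\disptest_h]\Vert_{\Gamma_i}^2+\tfrac1h\Vert\disptest_h\Vert_{\Gamma_{\bcdispD}}^2\bigr)$ with $c_0>0$.

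Finally I would pass from the strain seminorm to the full broken gradient norm using the Korn inequality for piecewise $H^1$ fields of \cite{Brenner04Korn}: there is $C_K>0$, depending only on $\Omega$ and the shape regularity, with $\Vert\nabla\disptest_h\Vert_{\Th_h}^2\le C_K\bigl(\Vert\DD(\disptest_h)\Vert_{\Th_h}^2+\tfrac1h\Vert[\disptest_h]\Vert_{\Gamma_i}^2+\tfrac1h\Vert\disptest_h\Vert_{\Gamma_{\bcdispD}}^2\bigr)$ for all $\disptest_h\in\Vdisp_h$. Combining this with the previous bound and the definition of $\Vert\cdot\Vert_{1,h}$ yields $d_h(\disptest_h,\disptest_h)\ge\kappa\Vert\disptest_h\Vert_{1,h}^2$ with $\kappa$ independent of $h$ and of all the physical coefficients. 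The step I expect to be the crux — and the reason one may simply cite \cite{HansboLarson02}, as the lemma statement does — is exactly this discrete Korn inequality: the classical Korn inequality fails for discontinuous piecewise polynomials, and it is the penalty on the jumps across $\Gamma_i$ together with the penalty on $\Gamma_{\bcdispD}$ that restores it; by contrast, the absorption of the consistency terms is only routine trace- and inverse-inequality bookkeeping.
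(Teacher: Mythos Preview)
Your proposal is correct and follows exactly the route the paper indicates: the paper does not give an explicit proof but states that it ``depends on Korn's inequality for discontinuous spaces~\cite{Brenner04Korn} and can be found in~\cite{HansboLarson02}'', which is precisely your two-step argument of (i) absorbing the symmetric consistency terms via trace/inverse estimates and Young's inequality, then (ii) invoking the broken Korn inequality to upgrade $\Vert\DD(\disptest_h)\Vert_{\Th_h}$ to $\Vert\nabla\disptest_h\Vert_{\Th_h}$. You have correctly identified the discrete Korn step as the nontrivial ingredient.
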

The norm $\Vert\cdot\Vert_{1,h}$ is defined as:
\[
\Vert \disptest_h\Vert_{1,h}  = \left(\sum_{T\in\Th_h} \Vert \nabla \disptest_h\Vert_T^2 + \sum_{F\in\Gamma_i} \frac{\gamma}{h} \Vert [\disptest_h]\Vert_F^2 + \sum_{F\in\Gamma_{\bcdispD}} \frac{\gamma}{h}\Vert\disptest_h\Vert_F^2\right)^{1/2},\quad
\forall \disptest_h\in\Vdisp_h.
\]
As a corollary of Lemma~\ref{lem:coerc}, we have
\begin{equation}
\kappa \mu \Vert \disptest_h\Vert_{1,h}^2 +\lambda \Vert \nabla \cdot\disptest_h\Vert_\Omega^2 \leq a_h(\disptest_h,\disptest_h),\quad
\forall \disptest_h\in\Vdisp_h.
\end{equation}

We follow~\cite{yi2013coupling} and apply the theory of differential algebraic
equations to the solution of the semidiscrete problem. To this end, we
need the following lemma:

\begin{lemma}
  A  differential algebraic equation of the form
  \begin{gather*}
    \bfE \partial_t \bfx(t) + \bfA \bfx(t) = \bfq(t)
  \end{gather*}
  with $\bfA, \bfE\in \mathbb R^{m\times m}$ and $\bfq(t) \in \mathbb R^m$ is
  solvable, if and only if the matrix pencil $\sigma \bfE+\bfA$ is regular,
  that is, there is a value $\sigma \neq 0$, such that $\sigma \bfE+\bfA$
  is an invertible matrix.
\end{lemma}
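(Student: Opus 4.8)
The plan is to prove the stated characterization of solvability for the linear DAE $\bfE\partial_t\bfx+\bfA\bfx=\bfq$ by reduction to the classical Weierstrass--Kronecker canonical form of a matrix pencil. First I would recall that a pencil $\sigma\bfE+\bfA$ is called regular precisely when $\det(\sigma\bfE+\bfA)$ is not the zero polynomial in $\sigma$; since this determinant is a polynomial of degree at most $m$, it is either identically zero or has only finitely many roots, so regularity is equivalent to the existence of a single $\sigma\neq 0$ with $\sigma\bfE+\bfA$ invertible. This gives the clean reformulation used in the statement.

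For the \emph{if} direction, I would invoke the Weierstrass canonical form: if the pencil is regular, there exist invertible matrices $\bfP,\bfQ\in\mathbb R^{m\times m}$ such that $\bfP\bfE\bfQ=\mathrm{diag}(\bfI,\bfN)$ and $\bfP\bfA\bfQ=\mathrm{diag}(\bfC,\bfI)$, where $\bfN$ is nilpotent. Setting $\bfx=\bfQ\bfy$ and multiplying the DAE on the left by $\bfP$, the system decouples into an ordinary (index-zero) block $\partial_t\bfy_1+\bfC\bfy_1=\tilde\bfq_1$, solvable for any initial data and any $\tilde\bfq_1$ by the variation-of-constants formula, and a nilpotent block $\bfN\partial_t\bfy_2+\bfy_2=\tilde\bfq_2$. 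The second block is solved explicitly by the finite telescoping sum $\bfy_2=\sum_{j\ge 0}(-1)^j\bfN^j\partial_t^j\tilde\bfq_2$, which terminates because $\bfN$ is nilpotent; this requires $\tilde\bfq_2$ (hence $\bfq$) to be sufficiently smooth, which I would note as the implicit regularity hypothesis. Transforming back via $\bfx=\bfQ\bfy$ yields a solution, so solvability holds.

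For the \emph{only if} direction, I would argue by contraposition: if $\sigma\bfE+\bfA$ is singular for every $\sigma$, then $\det(\sigma\bfE+\bfA)\equiv 0$, and one shows the homogeneous system $\bfE\partial_t\bfx+\bfA\bfx=0$ either has no solution for some consistent-looking inhomogeneity or has a nontrivial family of solutions obstructing well-posed solvability. Concretely, using the Kronecker canonical form for singular pencils, there is a nontrivial polynomial-in-$\sigma$ vector $\bfv(\sigma)$ in the kernel of $(\sigma\bfE+\bfA)^T$ for all $\sigma$; pairing the equation against the corresponding time-dependent vector produces a nontrivial scalar constraint on $\bfq$ that fails for generic right-hand sides, so the DAE is not solvable in the required sense. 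I would phrase this so that ``solvable'' means solvable for all (smooth) $\bfq$ and compatible initial data, matching how the lemma is later applied.

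The main obstacle is bookkeeping rather than conceptual: one must be careful about what ``solvable'' means (existence for arbitrary smooth data versus for the specific data arising from the semidiscrete Biot system) and about the smoothness demanded of $\bfq$ so that the nilpotent block makes sense. I expect to handle this by stating the hypothesis that $\bfq\in C^\infty$ (or $C^{m}$) and remarking that in our application $\bfq$ inherits the regularity of $f_1,\bff_2,p_{\mathrm D},\disp_{\mathrm D}$, so the abstract lemma applies directly. A further minor point is verifying $\sigma\neq 0$ can always be chosen, which follows since the nonzero polynomial $\det(\sigma\bfE+\bfA)$ has at most $m$ roots and we may avoid $\sigma=0$.
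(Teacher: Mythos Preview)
The paper does not prove this lemma at all: immediately after the statement it writes ``This lemma can be found in~[Theorem~2.4]{MattheijMolenaar02}'' and moves on to verifying that the semidiscrete Biot system satisfies the hypothesis. So there is nothing to compare your argument against in the paper itself.

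Your sketch is the standard and correct route: reformulate regularity of the pencil as $\det(\sigma\bfE+\bfA)\not\equiv 0$, use the Weierstrass canonical form in the regular case to decouple into an ODE block and a nilpotent block solved by the finite differentiation sum, and in the singular case use the Kronecker form to produce a left null vector of the pencil that imposes an unfulfillable compatibility constraint on generic $\bfq$. This is essentially what one finds in textbook treatments (including the cited reference), so your approach is not different in spirit, just explicit where the paper is content to cite. Your remarks about the implicit smoothness hypothesis on $\bfq$ and about the precise meaning of ``solvable'' are well taken; the paper glosses over both, relying on the reader to interpret solvability as existence of a unique solution for compatible initial data, which is exactly how the lemma is used in the subsequent argument for the semidiscrete system.
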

This lemma can be found in~\cite[Theorem
2.4]{MattheijMolenaar02}. Solvable DAE have the property, that initial
value problems are uniquely solvable, if the initial condition is
compatible with the algebraic constraints. Thus, it remains to verify
that the semidiscrete system~(\ref{eq:contscheme}a--c) meets the
assumptions of this lemma. Obviously, these equations constitute a
finite dimensional linear system of equations with $\bfE$ corresponding
to the time derivative part. Thus, it remains to show the following lemma.

\begin{lemma}
  \label{lemma:uniqueness}
  \begin{subequations}
    \label{eq:uniqueness}
    For any $\sigma > 0$, the system
    \begin{xalignat}2
      \bigl(\sigma \bigl(\storage p_h+\biotwillis \div\disp_h),q\bigr) 
      +\bigl(\div \darcy_h,q\bigr) &= 0,
      & \forall& q\in Q_h,
      \label{eq:uniqueness1}
      \\
      \bigl(K^{-1}\darcy_h,\darcytest\bigr)
      - \bigl(p_h,\div \darcytest) &= 0,
      &\forall&\darcytest\in\Vdarcy_h,
      \label{eq:uniqueness2}
      \\
      a_h(\disp_h,\disptest\bigr)
      - \biotwillis  \bigl(p, \div \disptest\bigr) &= 0,
      &\forall& \disptest\in\Vdisp_h,
      \label{eq:uniqueness3}
    \end{xalignat}    
  \end{subequations}
  has the unique solution $(p_h, \darcy_h, \disp_h) = {\bf 0}$.
\end{lemma}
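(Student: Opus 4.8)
The plan is the standard energy argument for symmetric saddle-point systems, supplemented by surjectivity of the discrete divergence to cover the degenerate case $\storage=0$. First I would test \eqref{eq:uniqueness1} with $q=p_h$, equation \eqref{eq:uniqueness2} with $\darcytest=\darcy_h$, and equation \eqref{eq:uniqueness3} with $\disptest=\disp_h$. From \eqref{eq:uniqueness2} this gives $(\div\darcy_h,p_h)=(K^{-1}\darcy_h,\darcy_h)$, and from \eqref{eq:uniqueness3} it gives $\biotwillis(p_h,\div\disp_h)=a_h(\disp_h,\disp_h)$. Inserting both into \eqref{eq:uniqueness1} tested against $p_h$ cancels the two cross terms and leaves
\[
\sigma\storage\Vert p_h\Vert_\Omega^2+\sigma\,a_h(\disp_h,\disp_h)+(K^{-1}\darcy_h,\darcy_h)=0 .
\]

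Each term here is nonnegative: $\sigma>0$ and $\storage\ge 0$; $a_h(\disp_h,\disp_h)\ge\kappa\mu\Vert\disp_h\Vert_{1,h}^2\ge 0$ by the corollary of Lemma~\ref{lem:coerc}; and $(K^{-1}\darcy_h,\darcy_h)\ge 0$ because $K$, and hence $K^{-1}$, is symmetric positive definite. Consequently all three terms vanish. Positive definiteness of $K^{-1}$ then forces $\darcy_h=0$, and the vanishing of $a_h(\disp_h,\disp_h)$ together with the corollary of Lemma~\ref{lem:coerc} forces $\Vert\disp_h\Vert_{1,h}=0$, hence $\disp_h=0$ because $\Vert\cdot\Vert_{1,h}$ is a norm on $\Vdisp_h$ — this is precisely the content of the discrete Korn inequality under the boundary-condition assumption that excludes rigid translations and rotations.

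It then remains to prove $p_h=0$, which is the one point requiring more than the energy identity, since when $\storage=0$ that identity says nothing about $p_h$. Here I would go back to equation \eqref{eq:uniqueness3}: with $\disp_h=0$ it reduces to $\biotwillis(p_h,\div\disptest)=0$ for all $\disptest\in\Vdisp_h$, i.e.\ $(p_h,\div\disptest)=0$ for all $\disptest\in\Vdisp_h$ since $\biotwillis\ne 0$. Using $\div\Vdisp_h=Q_h$ from \eqref{eq:Vhpressurecond} (equivalently the inf-sup condition \eqref{eq:inf-sup2}), pick $\disptest\in\Vdisp_h$ with $\div\disptest=p_h$ to obtain $\Vert p_h\Vert_\Omega^2=0$, hence $p_h=0$. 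This yields $(p_h,\darcy_h,\disp_h)={\bf 0}$.

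The only genuinely non-routine step is this last one: the symmetric energy estimate is insensitive to the pressure in the incompressible limit $\storage=0$, so uniqueness of $p_h$ must come from a separate structural ingredient, namely the exactness property $\div\Vdisp_h=Q_h$. A minor technical point to record along the way is that $\Vert\cdot\Vert_{1,h}$ is a norm, not merely a seminorm, on $\Vdisp_h$, which uses the $\Hdiv$-conformity of $\Vdisp_h$ together with the jump and Dirichlet penalty terms and the no-rigid-motion assumption.
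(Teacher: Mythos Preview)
Your proof is correct and follows essentially the same approach as the paper: the energy identity obtained by testing with $(p_h,\darcy_h,\sigma\disp_h)$ (your substitution produces the identical expression), followed by an inf-sup argument to recover $p_h=0$ in the degenerate case $\storage=0$. The only minor difference is that for this last step the paper uses equation~\eqref{eq:uniqueness2} with $\darcy_h=0$ and the inf-sup condition~\eqref{eq:inf-sup1} for $\Vdarcy_h$, whereas you use equation~\eqref{eq:uniqueness3} with $\disp_h=0$ and $\div\Vdisp_h=Q_h$; both are valid, but the paper's choice avoids your extra assumption $\biotwillis\neq 0$.
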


\begin{proof}
  Choosing test functions $q = p_h$, $\darcytest = \darcy_h$, and
  $\disptest = \sigma \disp_h$ and adding the three equations, we obtain
  \begin{gather*}
    \sigma \storage \Vert  p_h\Vert_\Omega^2
    +\Vert K^{-1/2}\darcy_h\Vert_\Omega^2 
    +\sigma\mu d_h(\disp_h,\disp_h)
    + \sigma\lambda \Vert \div \disp_h\Vert_\Omega^2
    = 0.
  \end{gather*}
  This, combined with the coercivity of $d_h(\cdot,\cdot)$, yields $\darcy_h=0$ and $\disp_h=0$ and concludes the proof for
  $\storage \neq 0$. For $\storage=0$, we choose in~\eqref{eq:uniqueness2}
  according to the inf-sup condition a test function
  $\darcytest \neq 0$ with $\div \darcytest = p_h$. Thus, $p_h=0$.
\end{proof}

Thus, together with the previous lemma, our DAE is solvable.  This
lemma indeed proved that there is not only one $\sigma$ for which the
problem is solvable, but that it is solvable for all positive
$\sigma$. While such a strong statement is not needed here, it is the
core of the proof of well-definedness of time stepping schemes below.

\section{A priori error estimates for continuous-in-time scheme}
\label{sec:priori-error-estim}

In this section, we state our theoretical results.  The proofs are
given in the rest of the paper. We begin with a simple lemma on math
conservation, which motivated us to choose this method. It turns out
that mass conservation is achieved pointwisely by this method.
\begin{lemma}
  \label{lemma:conservation}
  Let the spaces $Q_h$, $\Vdisp_h$, and $\Vdarcy_h$ be divergence
  conforming as in equations~\eqref{eq:pressurespace}
  and~\eqref{eq:Vhpressurecond}. Then, for any $\sigma>0$, the
  solution $(p_h,\darcy_h,\disp_h)$ of the
  system~\eqref{eq:uniqueness} obey the pointwise mass conservation
  equation
  \begin{gather}
    \label{eq:conservation}
    \sigma \bigl(\storage p_h+\biotwillis \div\disp_h) + \div \darcy_h = 0,
    \qquad \forall x\in T,\, \forall T\in \Th_h.
  \end{gather}
\end{lemma}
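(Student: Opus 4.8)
The plan is to exploit the key structural hypothesis $\div \Vdarcy_h = Q_h$ together with $\div \Vdisp_h = Q_h$, which guarantees that the quantity appearing on the left-hand side of~\eqref{eq:conservation} is itself a member of the discrete pressure space $Q_h$. Concretely, set
\[
  r_h := \sigma\bigl(\storage p_h + \biotwillis \div\disp_h\bigr) + \div \darcy_h.
\]
Since $p_h \in Q_h$, $\disp_h \in \Vdisp_h$ with $\div\disp_h \in Q_h$ by~\eqref{eq:Vhpressurecond}, and $\darcy_h \in \Vdarcy_h$ with $\div\darcy_h \in Q_h$ by~\eqref{eq:pressurespace}, we conclude $r_h \in Q_h$.

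The next step is to observe that equation~\eqref{eq:uniqueness1} says precisely that $(r_h, q)_\Omega = 0$ for all $q \in Q_h$. Taking $q = r_h$, which is legitimate because $r_h \in Q_h$, gives $\Vert r_h \Vert_\Omega^2 = 0$, hence $r_h = 0$ as an element of $L^2(\Omega)$, i.e., pointwise almost everywhere on $\Omega$, and in particular pointwise on each element $T \in \Th_h$. This is exactly~\eqref{eq:conservation}.

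There is no real obstacle here; the only thing to be careful about is making explicit that the algebraic relation $\div\Vdarcy_h = Q_h$ (and the derived $\div\Vdisp_h = Q_h$) is what makes the test function choice $q = r_h$ admissible — without the matching-space property one would only get that $r_h$ is $L^2$-orthogonal to $Q_h$, which would not force it to vanish. I would also remark that the identical argument applies verbatim to the full semidiscrete scheme~\eqref{eq:contscheme1} with right-hand side $f_1$, provided $f_1 \in Q_h$ (or after replacing $f_1$ by its $L^2$-projection onto $Q_h$), yielding pointwise mass conservation for the actual solution of~\eqref{eq:contscheme}, not merely for the homogeneous system~\eqref{eq:uniqueness}; this is presumably the statement the authors really care about, and it is what "motivated us to choose this method."
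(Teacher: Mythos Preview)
Your proof is correct and follows exactly the same approach as the paper: define $r_h$, observe via~\eqref{eq:pressurespace} and~\eqref{eq:Vhpressurecond} that $r_h\in Q_h$, and test~\eqref{eq:uniqueness1} with $q=r_h$. Your additional remark about the inhomogeneous case with $\tilde f_1$ is also on target and anticipates a point the paper makes later.
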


\begin{proof}
  We denote
  \begin{gather*}
    r_h = \sigma \bigl(\storage p_h+\biotwillis \div\disp_h) + \div \darcy_h.
  \end{gather*}
  Because of assumptions \eqref{eq:pressurespace} and \eqref{eq:Vhpressurecond}, the quantity $r_h$ belongs to $Q_h$.
  We test equation~\eqref{eq:uniqueness1} with $r_h$ to obtain the result.
\end{proof}

Next, we investigate the elastic subproblem. Let
$\tilde{\disp}(t) \in\Vdisp_h$ be the projection of $\disp(t)$ onto
$\Vdisp_h$ with respect to the linear elasticity operator, namely for
any $t>0$ let $\tilde{\disp}(t)$ satisfy
\begin{equation}
  \label{eq:wproj}
  a_h(\tilde{\disp}(t),\disptest) 
  = a_h(\disp(t),\disptest), \quad
  \forall \disptest\in\Vdisp_h.
\end{equation}
From the coercivity of $a_h(\cdot,\cdot)$, it is easy to see that $\tilde{\disp}(t)$
exists and is unique. By adaptation of~\cite[Theorem~8]{HansboLarson02}
to the Raviart-Thomas element and by the standard duality argument,
we have
\begin{prop}
  \label{thm:proj}
  There is a constant $C$ independent of
$h, \lambda, \mu, \biotwillis , \storage$ such that
\begin{gather}
  \label{eq:elasiticity-h1}
  \lVert \tilde{\disp}(t) -\disp(t)\rVert_{1,h}^2
  \leq C h^{2k} \lvert \disp(t)\rvert_{H^{k+1}(\Omega)}^2,
  \\    
  \lVert \tilde{\disp}(t) -\disp(t)\rVert_{\Omega}^2
  \leq C h^{2k+2} \lvert \disp(t)\rvert_{H^{k+1}(\Omega)}^2,
\\
\lVert \partial_t (\tilde{\disp}(t) -\disp(t))\rVert_{1,h}^2
  \leq C h^{2k} \lvert \partial_t\disp(t)\rvert_{H^{k+1}(\Omega)}^2.
\end{gather}
\end{prop}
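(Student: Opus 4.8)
\textbf{Proof proposal for Proposition~\ref{thm:proj}.}
The plan is to recognize \eqref{eq:wproj} as the elliptic projection associated with the SIPG bilinear form $a_h$ for the elasticity operator, and to treat the two estimates by the standard Céa/Aubin–Nitsche machinery, adapted to the $\Hdiv$-conforming Raviart–Thomas-type space $\Vdisp_h$. For the energy estimate \eqref{eq:elasiticity-h1}, I would start from the coercivity corollary of Lemma~\ref{lem:coerc}, namely $\kappa\mu\Vert\cdot\Vert_{1,h}^2+\lambda\Vert\div\cdot\Vert_\Omega^2\le a_h(\cdot,\cdot)$ on $\Vdisp_h$. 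Writing $\tilde\disp-\disp = (\tilde\disp-\pi_h\disp)+(\pi_h\disp-\disp)$ with $\pi_h$ the approximation operator introduced before \eqref{eq:prop1}, the Galerkin orthogonality $a_h(\tilde\disp-\disp,\disptest)=0$ for all $\disptest\in\Vdisp_h$ lets me bound $\Vert\tilde\disp-\pi_h\disp\Vert_{1,h}$ (which lies in $\Vdisp_h$) by $a_h$-continuity against $\pi_h\disp-\disp$; the key point is that $a_h$ is bounded on $\Vdisp+\Vdisp_h$ in a norm controlled by $\Vert\cdot\Vert_{1,h}$ augmented by the $H^{3/2+\epsilon}$-type face terms, so standard SIPG continuity (see \cite{Riviere2008,HansboLarson02}) applies once one has trace and inverse inequalities on the mesh. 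The approximation bounds on $\pi_h\disp-\disp$ then follow directly from the stated properties of $\pi_h$: the elementwise $H^r$ estimate with $r=1$ gives the broken gradient term of order $h^k$, the $\div$-estimate handles the $\lambda\Vert\div(\cdot)\Vert_\Omega$ contribution, and the face terms are controlled by the trace inequality combined with the same approximation estimates. A triangle inequality closes \eqref{eq:elasiticity-h1}; importantly, the constant $\kappa$ in the coercivity bound is $h$- and parameter-independent, and the $\lambda$-term only helps, so no dependence on $\lambda,\mu,\biotwillis,\storage$ enters.

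For the $L^2$ estimate, I would run the Aubin–Nitsche duality argument: let $\bfphi\in\Vdisp$ solve the elasticity problem with right-hand side $\tilde\disp-\disp$, i.e. $a_h(\disptest,\bfphi)=(\tilde\disp-\disp,\disptest)_\Omega$ for all $\disptest\in\Vdisp$, and invoke the elliptic regularity assumption so that $\Vert\bfphi\Vert_{H^2(\Omega)}\le C\Vert\tilde\disp-\disp\Vert_\Omega$. Taking $\disptest=\tilde\disp-\disp$ and using Galerkin orthogonality to subtract $a_h(\tilde\disp-\disp,\pi_h\bfphi)=0$, I get $\Vert\tilde\disp-\disp\Vert_\Omega^2 = a_h(\tilde\disp-\disp,\bfphi-\pi_h\bfphi)$, which by $a_h$-continuity is bounded by $\Vert\tilde\disp-\disp\Vert_{1,h}\,\Vert\bfphi-\pi_h\bfphi\Vert_{1,h}\lesssim (h^k|\disp|_{H^{k+1}})(h\,\Vert\bfphi\Vert_{H^2})\lesssim h^{k+1}|\disp|_{H^{k+1}}\Vert\tilde\disp-\disp\Vert_\Omega$; dividing gives \eqref{eq:elasiticity-h1}'s $L^2$ companion with the $h^{2k+2}$ rate. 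One must be slightly careful that $a_h$ is still well-defined and symmetric when one argument is only $H^2$ (regularity of $\bfphi$) and consistency holds so that the dual problem's continuous form agrees with $a_h$ on $\Vdisp$ — this is exactly the $H^{3/2+\epsilon}$-consistency already used in the consistency lemma above, applied to $\bfphi$.

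The third estimate, on $\partial_t(\tilde\disp-\disp)$, follows by differentiating the defining relation \eqref{eq:wproj} in $t$: since $a_h$ is time-independent, $a_h(\partial_t\tilde\disp,\disptest)=a_h(\partial_t\disp,\disptest)$ for all $\disptest\in\Vdisp_h$, so $\partial_t\tilde\disp$ is the elliptic projection of $\partial_t\disp$. Applying the already-proved energy estimate \eqref{eq:elasiticity-h1} with $\disp$ replaced by $\partial_t\disp$ immediately yields $\Vert\partial_t(\tilde\disp-\disp)\Vert_{1,h}^2\le Ch^{2k}|\partial_t\disp|_{H^{k+1}(\Omega)}^2$; this requires only that $\partial_t\disp(t)$ be sufficiently regular in space, which is the implicit regularity hypothesis.

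The main obstacle is not any single inequality but assembling a clean, parameter-robust $a_h$-continuity statement on the nonconforming-plus-regular space $\Vdisp + \Vdisp_h$ that is compatible with both the primal and the dual arguments: one needs the consistency terms (the $\DD(\cdot)\bfn$ face integrals) to be controlled uniformly, which forces the use of the augmented norm $\Vert v\Vert_{1,h}^2 + \sum_T h_T\Vert\DD(v)\bfn\Vert_{\partial T}^2$ and hence a trace inequality valid for $H^{3/2+\epsilon}$ functions, not merely for polynomials. Once that continuity bound is in place with an $h$- and $(\lambda,\mu)$-independent constant — which is precisely the content of \cite{HansboLarson02}'s analysis, here transcribed to the Raviart–Thomas setting via the stated properties of $\pi_h$ — the rest is the routine Céa–Nitsche bookkeeping sketched above.
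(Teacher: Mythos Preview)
Your proposal is correct and is exactly the approach the paper takes: the paper does not give a detailed proof but simply states that the proposition follows ``by adaptation of \cite[Theorem~8]{HansboLarson02} to the Raviart--Thomas element and by the standard duality argument,'' which is precisely the C\'ea/Aubin--Nitsche machinery you describe, and the time-derivative estimate follows by differentiating~\eqref{eq:wproj} as you indicate. Your identification of the main technical content---the $(\lambda,\mu)$-robust continuity bound on $\Vdisp+\Vdisp_h$ from \cite{HansboLarson02}---is the right emphasis, and your honest flag that this is where the parameter-independence actually lives (rather than being automatic from ``the $\lambda$-term only helps'') is well placed.
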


We have furthermore observed in experiments, that the divergence is
converging optimally. These experiments included rectangular meshes
with local refinement. Currently, there is no proof for this fact, and
it may be due to superconvergence effects related to the meshes we
used. Following~\cite{ArnoldBoffiFalk05}, we do not expect this to
hold on general quadrilateral meshes. Nevertheless, we would like to
present an analysis using this fact alongside standard
convergence. Accordingly, we will use at some point:

\begin{assumption}
  \label{ass:hdiv}
There is a constant $C_{\mu,\lambda}$ that is independent of
$h, \biotwillis , \storage$ such that
\begin{gather}
  \label{eq:elasticity-hdiv}
  \lVert\div (\tilde{\disp}(t)-\disp(t))\rVert_{\Omega}^2
  \leq C_{\mu,\lambda} h^{2k+2} \lvert \div\disp(t)\rvert_{H^{k+1}(\Omega)}^2.
\end{gather}
\end{assumption}
This assumption would naturally imply 
\begin{gather}
  \label{eq:elasticity-hdiv-time}
  \lVert\div \partial_t(\tilde{\disp}(t)-\disp(t))\rVert_{\Omega}^2
  \leq C_{\mu,\lambda} h^{2k+2} \lvert \div\partial_t\disp(t)\rvert_{H^{k+1}(\Omega)}^2.
\end{gather}

Now we are ready to state our first main theorem:
\begin{theorem}
\label{thm:conv}
There is a constant $C$ independent of $h,  \lambda, \mu, \biotwillis , \storage$
such that
\begin{eqnarray*}
\forall t>0\quad
\mu \Vert \disp_h(t) -\disp(t)\Vert_{1,h}^2
\leq C h^{2k} (\mathcal{M} +\mu \Vert \disp(t)\Vert_{H^{k+1}(\Omega)}^2),
\end{eqnarray*}
and
\begin{eqnarray*}
\storage \Vert p_h(t) - p(t)\Vert_{\Omega}^2
+\Vert K^{-1/2} (\darcy_h-\darcy)\Vert_{L^2(0,t;L^2(\Omega))}^2
\leq C \epsilon_{\text{div}}(h)^2 (\mathcal{M}+
\storage \Vert p(t)\Vert_{H^{k+1}(\Omega)}^2),
\end{eqnarray*}
where $\epsilon_{\text{div}}(h) = h^k$ and
\[
\mathcal{M} = 
\biotwillis ^2\Vert \partial_t \div \disp\Vert_{L^2(0,T;H^{k+1}(\Omega))}^2
+\Vert \darcy\Vert_{L^2(0,T;H^{k+1}(\Omega))}^2.
\]
If in addition
Assumption~\ref{ass:hdiv} holds, we have
$\epsilon_{\text{div}}(h) =h^{k+1}$
and there is a constant $C_{\mu,\lambda}$ independent
of $h, \biotwillis , \storage$ such that
\begin{eqnarray*}
\lambda \Vert \div (\disp_h(t)-\disp(t))\Vert_{\Omega}^2
\leq C_{\mu,\lambda} h^{2k+2} (\mathcal{M} +\lambda\Vert\div\disp(t)\Vert_{H^{k+1}(\Omega)}^2).
\end{eqnarray*}
\end{theorem}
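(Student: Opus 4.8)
The plan is to follow the classical energy argument for coupled Biot-type discretizations, splitting the error through the elasticity projection $\tilde\disp$ and an $L^2$-projection for the pressure, and to exploit the pointwise mass conservation of Lemma~\ref{lemma:conservation} to keep the compressibility parameters $\storage$ and $\biotwillis$ out of the constants. Write $\disp_h-\disp = (\disp_h-\tilde\disp) + (\tilde\disp-\disp) =: \bftheta_\disp + \bfrho_\disp$, and similarly $p_h - p = (p_h - \tilde p) + (\tilde p - p) =: \theta_p + \rho_p$ where $\tilde p\in Q_h$ is chosen so that $(\tilde p - p, q)=0$ for all $q\in Q_h$; introduce $\pi_h\darcy$ for the seepage velocity via the operator of \eqref{eq:prop1}. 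Subtracting the semidiscrete scheme \eqref{eq:contscheme} from the consistency identities gives the error equations in the discrete unknowns $\theta_p,\ \darcy_h-\pi_h\darcy,\ \bftheta_\disp$, with right-hand sides involving only the projection errors $\rho_p,\ \darcy-\pi_h\darcy,\ \bfrho_\disp$. The key simplification is that $a_h(\bfrho_\disp,\disptest)=0$ by the definition \eqref{eq:wproj} of $\tilde\disp$, and that $(\div\pi_h\darcy - \div\darcy, q)=0$ and $(\rho_p,q)=0$ for $q\in Q_h$, so several coupling terms vanish outright.

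The core estimate is obtained by testing the pressure-mass equation with $q=\theta_p$, the Darcy equation with $\darcytest = \darcy_h-\pi_h\darcy$, and the momentum equation with $\disptest = \partial_t\bftheta_\disp$ (i.e., differentiating the momentum equation in time first, which is legitimate by Proposition~\ref{thm:proj} applied to $\partial_t\disp$), then adding. The elasticity term produces $\tfrac12\tfrac{d}{dt}\,a_h(\bftheta_\disp,\bftheta_\disp)$, the storage term produces $\tfrac12\storage\tfrac{d}{dt}\|\theta_p\|_\Omega^2$, the Darcy term produces $\|K^{-1/2}(\darcy_h-\pi_h\darcy)\|_\Omega^2$, and the cross terms $\biotwillis(\theta_p,\div\partial_t\bftheta_\disp)$ cancel against $\biotwillis(\partial_t\div\bftheta_\disp,\theta_p)$ coming from the mass equation. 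What remains on the right is controlled by the projection errors: $\biotwillis(\partial_t\div\bfrho_\disp,\theta_p)$, the term $(\partial_t(\storage\rho_p),\theta_p)=0$, the Darcy-projection term $(K^{-1}(\darcy-\pi_h\darcy),\darcy_h-\pi_h\darcy)_\Omega$, and a term $(\rho_p,\div(\darcy_h-\pi_h\darcy))_\Omega$ — this last one is where one must be careful, but by \eqref{eq:prop1} and \eqref{eq:Vhpressurecond} the divergence $\div(\darcy_h-\pi_h\darcy)\in Q_h$ is itself controlled through the pointwise conservation relation and absorbs into the storage/elliptic terms rather than needing an $L^2$ bound on $\darcy_h-\pi_h\darcy$ directly. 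Integrating in time from $0$ to $t$, using the compatible initial conditions \eqref{eq:initialdisp} to make the initial contributions of $a_h(\bftheta_\disp,\bftheta_\disp)(0)$ and $\theta_p(0)$ vanish, applying coercivity of $a_h$ (the corollary of Lemma~\ref{lem:coerc}) to bound $\mu\|\bftheta_\disp\|_{1,h}^2 + \lambda\|\div\bftheta_\disp\|_\Omega^2$ from below, a Gronwall argument in the storage term, and finally the triangle inequality with the projection bounds of Proposition~\ref{thm:proj} and the approximation properties of $\pi_h$ yields the first two displayed inequalities with $\epsilon_{\text{div}}(h)=h^k$ (since without Assumption~\ref{ass:hdiv} the term $\|\div\bfrho_\disp\|_\Omega$ is only $O(h^k)$).

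For the last inequality, the point is simply that \emph{every} occurrence of $\|\div\bfrho_\disp\|_\Omega$ in the right-hand side of the energy estimate now improves to $O(h^{k+1})$ by Assumption~\ref{ass:hdiv} and its time-differentiated consequence \eqref{eq:elasticity-hdiv-time}; rerunning the same computation gives $\lambda\|\div\bftheta_\disp\|_\Omega^2 \le C_{\mu,\lambda}h^{2k+2}(\mathcal M + \dots)$, and then $\lambda\|\div(\disp_h-\disp)\|_\Omega^2 \le 2\lambda\|\div\bftheta_\disp\|_\Omega^2 + 2\lambda\|\div\bfrho_\disp\|_\Omega^2$ with both terms now $O(h^{2k+2})$, and one also gets $\epsilon_{\text{div}}(h)=h^{k+1}$ in the pressure/velocity bound for the same reason. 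The main obstacle I anticipate is the bookkeeping of the cross term $(\rho_p,\div(\darcy_h-\pi_h\darcy))_\Omega$ and more generally making sure that $\storage$ and $\biotwillis$ genuinely do not enter the constants — this forces the test-function choice $\disptest=\partial_t\bftheta_\disp$ rather than $\bftheta_\disp$ (so that the $\biotwillis$-cross terms cancel exactly as time derivatives rather than being estimated), and it requires that the $\biotwillis^2$-weighted and $\darcy$-weighted projection errors be collected into the single quantity $\mathcal M$ exactly as stated, with the $\storage$-weighted pressure term handled separately via Gronwall so that its constant stays $\storage$-independent in the final bound.
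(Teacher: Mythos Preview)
Your overall architecture---the error splitting through the elasticity projection $\tilde\disp$, the $L^2$ projection $\tilde p$, the Fortin interpolant $\pi_h\darcy$, and the test-function choices $q=\theta_p$, $\darcytest=\darcy_h-\pi_h\darcy$, $\disptest=\partial_t\bftheta_\disp$---matches the paper exactly, and the cancellation of the $\biotwillis$ cross terms is correct. Two points need fixing.

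First, a minor one: the term $(\rho_p,\div(\darcy_h-\pi_h\darcy))_\Omega$ is not something to ``be careful'' about; it is exactly zero. Since $\darcy_h,\pi_h\darcy\in\Vdarcy_h$ and $\div\Vdarcy_h=Q_h$, the divergence lies in $Q_h$, and $\rho_p=\tilde p - p$ is $L^2$-orthogonal to $Q_h$ by construction. No appeal to pointwise conservation is needed or helpful here.

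Second, the genuine gap: your plan to control the remaining term $\biotwillis(\partial_t\div\bfrho_\disp,\theta_p)$ ``via Gronwall in the storage term'' will \emph{not} produce a constant independent of $\storage$. To run Gronwall you would have to bound this term by something like $\epsilon\|\theta_p\|_\Omega^2 + C_\epsilon\biotwillis^2\|\partial_t\div\bfrho_\disp\|_\Omega^2$ and absorb $\epsilon\|\theta_p\|_\Omega^2$ into the left via $\tfrac{\storage}{2}\tfrac{d}{dt}\|\theta_p\|_\Omega^2$; the resulting Gronwall factor is then $e^{Ct/\storage}$, which blows up as $\storage\to 0$. The paper avoids Gronwall entirely. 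The missing ingredient is a separate lemma, proved from the Darcy error equation and the inf-sup condition~\eqref{eq:inf-sup1}: choosing $\darcytest\in\Vdarcy_h$ with $\div\darcytest=\theta_p$ in
\[
(\theta_p,\div\darcytest)_\Omega=(K^{-1}(\darcy_h-\darcy),\darcytest)_\Omega
\]
gives $\|\theta_p\|_\Omega\le C\|\darcy_h-\darcy\|_\Omega$ with $C$ depending only on $K$ and $\beta_\Vdarcy$. With this in hand,
\[
\biotwillis(\partial_t\div\bfrho_\disp,\theta_p)
\le C\biotwillis\|\partial_t\div\bfrho_\disp\|_\Omega\bigl(\|\darcy_h-\tilde\darcy\|_\Omega+\|\tilde\darcy-\darcy\|_\Omega\bigr),
\]
and the piece involving $\|\darcy_h-\tilde\darcy\|_\Omega$ is absorbed by Young's inequality into the $\|K^{-1/2}(\darcy_h-\tilde\darcy)\|_\Omega^2$ term already on the left. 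No Gronwall is used anywhere, and this is precisely why the final constants are independent of $\storage$ (including the case $\storage=0$, for which the same inf-sup lemma also supplies the pressure estimate a posteriori).
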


\subsection{Proof of Theorem~\ref{thm:conv}}
\label{sec:proof1}

We decompose the numerical error into an approximation error and a
discrete error.  For all $t>0$, choose $\tilde{\disp}(t)\in\Vdisp_h$
the $a_h(\cdot,\cdot)$-orthogonal projection of $\disp(t)$ satisfying
(\ref{eq:wproj}).  Denote the Fortin projection
$\tilde{\darcy}(t) = \pi_h \darcy(t) \in \Vdarcy_h$ and let
$\tilde{p}(t)$ be the $L^2$ projection of $p(t)$ in $Q_h$:
\begin{equation}\label{eq:L2proj}
\left(p(t)-\tilde{p}(t),q_h\right) = 0,\quad \forall q_h\in Q_h.
\end{equation}
This implies that 
\begin{equation}\label{eq:L2projder}
\left(\frac{\partial p}{\partial t}(t)-\frac{\partial\tilde{p}}{\partial t}(t),q_h\right) = 0,\quad \forall q_h\in Q_h.
\end{equation}
We have the following approximation error bound for the pressure:
\begin{eqnarray}
\Vert \tilde{p}(t)-p(t) \Vert_{\Omega} \leq C h^{k+1} \vert p(t) \vert_{H^{k+1}(\Omega)}.
\end{eqnarray}
Let us prove a lemma on the error $p_h-\tilde{p}$.
\begin{lemma}
There is a constant $C$ independent of $h, \mu, \lambda, \biotwillis , \storage$ such that
\begin{gather}
  \label{eq:pressure-by-inf-sup}
  \Vert p_h(t)-\tilde{p}(t)\Vert_\Omega
  \leq C \Vert \darcy_h(t)-\darcy(t)\Vert_\Omega,
  \quad\forall t>0.  
\end{gather}

\label{lem:pressure}
\end{lemma}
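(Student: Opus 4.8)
The plan is to derive the estimate \eqref{eq:pressure-by-inf-sup} from the inf-sup condition \eqref{eq:inf-sup2} for the pair $(\Vdisp_h,Q_h)$, using the second equation of the scheme, namely the Darcy law \eqref{eq:contscheme2}. The quantity $p_h(t)-\tilde p(t)$ lies in $Q_h$, so by \eqref{eq:inf-sup2} there exists $\disptest\in\Vdisp_h$ with $\div\disptest = p_h(t)-\tilde p(t)$ and $\Vert\disptest\Vert_{1,h}\le\beta_{\Vdisp}^{-1}\Vert p_h(t)-\tilde p(t)\Vert_\Omega$. Actually, since what we need is a test function in $\Vdarcy_h$ to plug into \eqref{eq:contscheme2}, I would instead invoke \eqref{eq:inf-sup1}: there exists $\darcytest\in\Vdarcy_h$ with $\div\darcytest = p_h(t)-\tilde p(t)$ and $\Vert\darcytest\Vert_{\Hdiv(\Omega)}\le\beta_{\Vdarcy}^{-1}\Vert p_h(t)-\tilde p(t)\Vert_\Omega$.

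Next I would compute $\Vert p_h-\tilde p\Vert_\Omega^2 = (p_h-\tilde p,\div\darcytest)_\Omega$. Split this as $(p_h,\div\darcytest)_\Omega - (\tilde p,\div\darcytest)_\Omega$. For the first term, use the discrete Darcy equation \eqref{eq:contscheme2} with this $\darcytest$ as test function: $(p_h,\div\darcytest)_\Omega = (K^{-1}\darcy_h,\darcytest)_\Omega + (p_{\mathrm D},\darcytest\cdot\bfn)_{\Gamma_{p\mathrm D}}$. For the second term, use the $L^2$-projection property \eqref{eq:L2proj}, which gives $(\tilde p,\div\darcytest)_\Omega = (p,\div\darcytest)_\Omega$ since $\div\darcytest\in Q_h$; then integrate by parts (the continuous Darcy law \eqref{eq:pdedarcy} gives $K^{-1}\darcy = -\nabla p$, and $p = p_{\mathrm D}$ on $\Gamma_{p\mathrm D}$ while $\darcytest\cdot\bfn = 0$ on $\Gamma_{p\mathrm N}$), to obtain $(p,\div\darcytest)_\Omega = (K^{-1}\darcy,\darcytest)_\Omega + (p_{\mathrm D},\darcytest\cdot\bfn)_{\Gamma_{p\mathrm D}}$. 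Subtracting, the pressure-boundary terms cancel and I am left with $\Vert p_h-\tilde p\Vert_\Omega^2 = (K^{-1}(\darcy_h-\darcy),\darcytest)_\Omega$.

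Finally, apply Cauchy--Schwarz and boundedness of $K^{-1}$: $\Vert p_h-\tilde p\Vert_\Omega^2 \le \Vert K^{-1}(\darcy_h-\darcy)\Vert_\Omega\,\Vert\darcytest\Vert_\Omega \le C\,\Vert\darcy_h-\darcy\Vert_\Omega\,\Vert\darcytest\Vert_{\Hdiv(\Omega)} \le C\beta_{\Vdarcy}^{-1}\Vert\darcy_h-\darcy\Vert_\Omega\,\Vert p_h-\tilde p\Vert_\Omega$. Dividing by $\Vert p_h-\tilde p\Vert_\Omega$ yields the claimed bound with $C$ depending only on $\beta_{\Vdarcy}$ and the bound on $K^{-1}$ — in particular independent of $h,\mu,\lambda,\biotwillis,\storage$, as required. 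The only mildly delicate point is the consistency step: making sure the continuous Darcy equation is integrated against $\darcytest\in\Vdarcy_h\subset\Hdiv(\Omega)$ correctly, so that the only surviving boundary contribution is on $\Gamma_{p\mathrm D}$ and it exactly matches the one produced by the discrete equation; this is where one has to be careful that the boundary data $p_{\mathrm D}$ enters identically on both sides so that it cancels. Everything else is routine.
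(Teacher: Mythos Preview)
Your proposal is correct and follows essentially the same route as the paper: both use the inf-sup condition \eqref{eq:inf-sup1} to produce $\darcytest\in\Vdarcy_h$ with $\div\darcytest=p_h-\tilde p$, arrive at the identity $(p_h-\tilde p,\div\darcytest)_\Omega=(K^{-1}(\darcy_h-\darcy),\darcytest)_\Omega$, and conclude by Cauchy--Schwarz. The only cosmetic difference is that the paper starts directly from the Darcy error equation (already known from the consistency lemma) and eliminates the $(p-\tilde p,\div\darcytest)$ term via \eqref{eq:L2proj} and \eqref{eq:pressurespace}, whereas you re-derive that consistency inline by treating the discrete and continuous Darcy laws separately and showing the boundary terms cancel.
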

\begin{proof}
The error equation is
\[
(K^{-1}(\darcy_h-\darcy),\darcytest)_\Omega 
- (p_h-p,\div \darcytest)_\Omega = 0,\quad
\forall\darcytest\in\Vdarcy_h.
\]
Equivalently,
\[
 (p_h-\tilde{p},\div \darcytest)_\Omega = 
(K^{-1}(\darcy_h-\darcy),\darcytest)_\Omega 
 +(p-\tilde{p},\div \darcytest)_\Omega,\quad
\forall\darcytest\in\Vdarcy_h.
\]
Using properties (\ref{eq:L2proj}) and \eqref{eq:pressurespace}, we have 
\begin{equation}
 (p_h-\tilde{p},\div \darcytest)_\Omega = 
(K^{-1}(\darcy_h-\darcy),\darcytest)_\Omega
\le\Vert K^{-\frac12}(\darcy_h-\darcy)\Vert \,\Vert \darcytest\Vert
,\quad
\forall\darcytest\in\Vdarcy_h.
\label{eq:lemmerr}
\end{equation}
Since the pair $(\Vdarcy_h,Q_h)$ satisfies the inf-sup
condition~\eqref{eq:inf-sup1}, we can choose a test function
$\darcytest$ with $\div \darcytest = p_h -\tilde p$ and obtain
\begin{gather*}
  \Vert p_h - \tilde p\Vert^2
  \le \Vert K^{-\frac12}(\darcy_h-\darcy)\Vert
  \,\frac1{\beta_\Vdarcy}\Vert p_h - \tilde p\Vert,
\end{gather*}
which proves the result.
\end{proof}

We now write the system of error equations:
\begin{multline}
\bigl(
  \storage \partial_t(p_h-\tilde{p})
  +\biotwillis \partial_t\div(\disp_h-\tilde{\disp})
  + \div (\darcy_h-\tilde{\darcy}),q\bigr)
\\
  =\bigl(
  \storage \partial_t(p-\tilde{p})
  +\biotwillis \partial_t\div(\disp-\tilde{\disp})
  +\div (\darcy-\tilde{\darcy}),q\bigr),
\label{eq:conterror1}
\end{multline}
\begin{gather}
  \bigl(K^{-1}(\darcy_h-\tilde{\darcy}),\darcytest\bigr)
  - \bigl(p_h-\tilde{p},\div \darcytest\bigr) 
  =
  \bigl(K^{-1}(\darcy-\tilde{\darcy}),\darcytest\bigr)
  - \bigl(p-\tilde{p},\div \darcytest\bigr),
\label{eq:conterror2}  
\end{gather}
\begin{equation}
  a_h(\disp_h-\tilde{\disp},\disptest)
  - \biotwillis  \bigl(p_h-\tilde{p},\div \disptest\bigr)
  = a_h(\disp-\tilde{\disp},\disptest)
  - \biotwillis  \bigl(p-\tilde{p},\div \disptest\bigr).
\label{eq:conterror3}
\end{equation}
Next we  choose $q = p_h-\tilde{p}$, $\darcytest = \darcy_h-\tilde{\darcy}$ and $\disptest = \partial_t(\disp_h-\tilde{\disp})$ in \eqref{eq:conterror1}, \eqref{eq:conterror2} and \eqref{eq:conterror3} respectively.  We add the resulting equations and
obtain:
\begin{multline}
  \frac{\storage}{2}\frac{d}{dt} \bigl\Vert p_h-\tilde{p}\bigr\Vert_\Omega^2 
  + \bigl\Vert K^{-1/2}(\darcy_h-\tilde{\darcy})\bigr\Vert_\Omega^2
  +\frac{\mu}{2} \frac{d}{dt} d_h(\disp_h-\tilde{\disp},\disp_h-\tilde{\disp})
  + \frac{\lambda}{2} \frac{d}{dt} \bigl\Vert \div (\disp_h-\tilde{\disp})\bigr\Vert_\Omega^2
  \\
  =
  \bigl( \storage \partial_t(p-\tilde{p})
    +\biotwillis \partial_t\div(\disp-\tilde{\disp}),
    p_h-\tilde{p}\bigr)
  +\bigl(\div (\darcy-\tilde{\darcy}),p_h-\tilde{p}\bigr)
  \\
  +
  \bigl(K^{-1}(\darcy-\tilde{\darcy}),\darcy_h-\tilde{\darcy}\bigr)
  - \bigl(p-\tilde{p},\div (\darcy_h-\tilde{\darcy})\bigr)
\\
+ a_h(\disp-\tilde{\disp},\partial_t(\disp_h-\tilde{\disp})) 
- \biotwillis  (p-\tilde{p}, \div \partial_t(\disp_h-\tilde{\disp})).
\label{eq:add3}
\end{multline}
Using (\ref{eq:wproj}) we have
\[
 a_h(\disp-\tilde{\disp},\partial_t(\disp_h-\tilde{\disp})) = 0.
\]
Using property~\eqref{eq:prop1} of the Fortin interpolation, we have
\[
(\div (\darcy-\tilde{\darcy}),p_h-\tilde{p}) = 0.
\]
Using properties (\ref{eq:L2proj}), \eqref{eq:pressurespace} and \eqref{eq:Vhpressurecond}
we have
\[
(p-\tilde{p},\div (\darcy_h-\tilde{\darcy}))=0,
\]
\[
\biotwillis  (p-\tilde{p}, \div \partial_t(\disp_h-\tilde{\disp})) = 0.
\]
Using property \eqref{eq:L2projder}, we have
\[
\bigl( \storage \partial_t(p-\tilde{p}),p_h-\tilde{p}\bigr) = 0.
\]
Thus, equation~\eqref{eq:add3} reduces to
\begin{multline*}
  \frac{\storage}{2}\frac{d}{dt} \Vert p_h-\tilde{p}\Vert_\Omega^2 
  + \Vert K^{-1/2}(\darcy_h-\tilde{\darcy})\Vert_\Omega^2
  +\frac{\mu}{2} \frac{d}{dt} d_h(\disp_h-\tilde{\disp},\disp_h-\tilde{\disp})
  + \frac{\lambda}{2} \frac{d}{dt} \Vert \div (\disp_h-\tilde{\disp})\Vert^2
  \\
  =
  \bigl(\biotwillis \partial_t\div(\disp-\tilde{\disp}),p_h-\tilde{p}\bigr)
  + \bigl(K^{-1}(\darcy-\tilde{\darcy}),\darcy_h-\tilde{\darcy}\bigr).
\end{multline*}
The second term on the right-hand side is easily bounded
by approximation bounds
\[
(K^{-1}(\darcy-\tilde{\darcy}),\darcy_h-\tilde{\darcy})\leq
  \Vert K^{-1/2} (\darcy-\tilde{\darcy})\Vert_\Omega^2 
+ \frac14 \Vert K^{-1/2} (\darcy_h-\tilde{\darcy})\Vert_\Omega^2.
\]
\[
\leq C h^{2k+2} \Vert \darcy\Vert_{H^{k+1}(\Omega)}^2
+ \frac14 \Vert K^{-1/2} (\darcy_h-\tilde{\darcy})\Vert_\Omega^2.
\]
For the first term in the right-hand side we use Lemma~\ref{lem:pressure}
\begin{gather*}
\bigl(\biotwillis \partial_t\div(\disp-\tilde{\disp}),p_h-\tilde{p}\bigr)
\leq C \bigl\Vert \biotwillis \partial_t\div(\disp-\tilde{\disp})\bigr\Vert_\Omega
\bigl\Vert \darcy-\darcy_h\bigr\Vert_\Omega\\
\leq C \bigl\Vert \biotwillis \partial_t\div(\disp-\tilde{\disp})\bigr\Vert_\Omega
(\bigl\Vert \darcy-\tilde{\darcy}\bigr\Vert_\Omega
+\bigl\Vert \tilde{\darcy}-\darcy_h\bigr\Vert_\Omega),
\end{gather*}
which yields
with approximation results
\begin{multline*}
\bigl(\biotwillis \partial_t\div(\disp-\tilde{\disp}),p_h-\tilde{p}\bigr)
\\
\leq C \biotwillis ^2 \Vert \partial_t \nabla \cdot (\disp-\tilde{\disp})\Vert_{\Omega}^2
+ C h^{2k+2} \Vert \darcy\Vert_{H^{k+1}(\Omega)}^2
+ \frac14 \Vert K^{-1/2}(\darcy_h-\tilde{\darcy})\Vert_\Omega^2.
\end{multline*}
Therefore the error bound becomes 
\begin{multline*}
\frac{\storage}{2}\frac{d}{dt} \Vert p_h-\tilde{p}\Vert_\Omega^2 
+ \frac12 \Vert K^{-1/2}(\darcy_h-\tilde{\darcy})\Vert_\Omega^2
+\frac{\mu}{2} \frac{d}{dt} d_h(\disp_h-\tilde{\disp},\disp_h-\tilde{\disp})
+ \frac{\lambda}{2} \frac{d}{dt} \Vert \div (\disp_h-\tilde{\disp})\Vert_\Omega^2
\\
\leq C \biotwillis ^2 \Vert \partial_t \nabla \cdot (\disp-\tilde{\disp})\Vert_{\Omega}^2
+ C h^{2k+2} \Vert \darcy\Vert_{H^{k+1}(\Omega)}^2.
\end{multline*}
Multiply by $2$,  integrate from $\tau=0$ to $\tau = t$ and remark that 
$p_h(0)=\tilde{p}(0)$ and $\disp_h(0)=\tilde{\disp}(0)$:
\begin{eqnarray*}
\storage\Vert p_h-\tilde{p}\Vert_\Omega^2 
+ \int_0^t \Vert K^{-1/2}(\darcy_h-\tilde{\darcy})\Vert_\Omega^2 d\tau
+ \mu d_h(\disp_h-\tilde{\disp},\disp_h-\tilde{\disp})
+ \lambda \Vert \div (\disp_h-\tilde{\disp})\Vert_\Omega^2
\\
\leq C \biotwillis ^2  \int_0^t \Vert \partial_t \nabla \cdot (\disp-\tilde{\disp})\Vert_{\Omega}^2 d\tau
+ C h^{2k+2} \int_0^t \Vert \darcy\Vert_{H^{k+1}(\Omega)}^2 d\tau.
\end{eqnarray*}
Thus we have using (\ref{eq:acoer}) 
\begin{eqnarray*}
\storage\Vert p_h-\tilde{p}\Vert_\Omega^2 
+ \int_0^t \Vert K^{-1/2}(\darcy_h-\tilde{\darcy})\Vert_\Omega^2 d\tau
+ \kappa\mu \Vert \disp_h-\tilde{\disp}\Vert_{1,h}^2 
+ \lambda \Vert \div (\disp_h-\tilde{\disp})\Vert_\Omega^2
\\
\leq C \biotwillis^2 \int_0^t \Vert \partial_t \nabla \cdot (\disp-\tilde{\disp})\Vert_{\Omega}^2 d\tau
+ C h^{2k+2} \int_0^t \Vert \darcy\Vert_{H^{k+1}(\Omega)}^2 d\tau.
\end{eqnarray*}
We then conclude using \eqref{eq:elasiticity-h1} or Assumption
\eqref{eq:elasticity-hdiv-time},
triangle inequalities and approximation
bounds. In the case $\storage=0$, the estimate above does not yield an
estimate for the pressure. This can be recovered by
Lemma~\ref{lem:pressure}, such that in addition to the estimate above,
the pressure is bounded by~\eqref{eq:pressure-by-inf-sup}.

\section{Discrete-in-time Scheme}
\label{sec:discrete-time-scheme}

Let $\Delta t > 0$ denote the time step, and define $t^n = n \Delta t$ for $n\in\mathbb{N}$. We use a first order in time Euler scheme
and seek $(p_h^{n+1},\darcy_h^{n+1},\disp_h^{n+1})\in Q_h\times\Vdarcy_h\times\Vdisp_h$ such that for all $n\geq 0$
\begin{subequations}
  \label{eq:disc}
  \begin{xalignat}2
    \left(\frac{1}{\Delta t} \bigl(\storage p_h^{n+1}+\biotwillis \div\disp_h^{n+1}\bigr),q\right)
    + \bigl(\div \darcy_h^{n+1},q\bigr)
    &= \mathcal R_p^{n+1}(q),
    & \forall& q\in Q_h
    \label{eq:disc-press}
    \\
    \bigl(K^{-1}\darcy_h^{n+1},\darcytest\bigr)
    - \bigl(p_h^{n+1},\div \darcytest\bigr) &=
    \bigl(p_D^{n+1},\darcytest\cdot\bfn\bigr)_{\Gamma_{p\mathrm{D}}},
    &\forall&\darcytest\in\Vdarcy_h,
    \label{eq:disc-flow}
    \\
    a_h(\disp_h^{n+1},\disptest)
    - \biotwillis  \bigl(p_h^{n+1}, \div \disptest\bigr) 
    &= \mathcal R_u^{n+1}(\disptest)
    &\forall& \disptest\in\Vdisp_h,
    \label{eq:disc-stress}
  \end{xalignat}
  where the linear functions in the right-hand sides are
  \begin{align*}
    \mathcal R_p^{n+1}(q) &= (f_1^{n+1},q) + \left(\frac{1}{\Delta t} \bigl(\storage p_h^n+\biotwillis \div\disp_h^n\bigr),q\right),
    \\
    \mathcal R_u^{n+1}(\disptest) &=
    (\bff_2^{n+1},\disptest) + (\bfsigma_N^{n+1},\disptest)_{\Gamma_{\bcdispN}}
    -2\mu (\DD(\disptest)\bfn,\disp_D^{n+1})_{\Gamma_{\bcdispD}}
    +\frac{\gamma}{h} (\disp_D^{n+1},  \disptest_{\Gamma_{\bcdispD}}),
  \end{align*}
\end{subequations}

with initial conditions:
\begin{xalignat}2
  (p_h^0,q) &= (p_0,q) &\forall q&\in Q_h,
  \label{eq:discIC1}
  \\
  a_h(\disp_h^0,\disptest)
  &= a_h(\disp_0,\disptest),
  &\forall \disptest&\in\Vdisp_h. \label{eq:discIC2}
\end{xalignat}

The short-hand notation $\disp_D^n, f_1^n, \bff_2^n$ and
$\bfsigma_N^n$ is used for the functions $\disp_D, f_1, \bff_2$ and
$\bfsigma$ evaluated at $t^n$.

\begin{lemma}[Existence and uniqueness]
  There exists an unique solution $p_h^n, \darcy_h^n, \disp_h^n$
  satisfying (\ref{eq:disc}a--c) for all $n\geq 0$.
\end{lemma}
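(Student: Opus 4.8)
The plan is a routine induction on the time level $n$, at each step invoking the pencil regularity already packaged in Lemma~\ref{lemma:uniqueness}. First I would dispose of the initial data. The relation $(p_h^0,q)=(p_0,q)$ for all $q\in Q_h$ determines $p_h^0$ uniquely, being the Riesz representation of the bounded functional $q\mapsto(p_0,q)$ on the finite-dimensional inner-product space $Q_h$ (equivalently, $p_h^0$ is the $L^2$-projection of $p_0$ onto $Q_h$). Likewise, $a_h(\disp_h^0,\disptest)=a_h(\disp_0,\disptest)$ for all $\disptest\in\Vdisp_h$ has a unique solution $\disp_h^0\in\Vdisp_h$ by the coercivity of $a_h(\cdot,\cdot)$ (the corollary to Lemma~\ref{lem:coerc}) together with the Lax--Milgram lemma in finite dimensions. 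No initial value for $\darcy_h$ is needed, since $\darcy_h^n$ never enters the right-hand sides $\mathcal R_p^{n+1}$, $\mathcal R_u^{n+1}$; the discrete solution is thus the pair $(p_h^0,\disp_h^0)$ at level $0$ together with triples $(p_h^n,\darcy_h^n,\disp_h^n)$ for $n\ge 1$.

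For the inductive step, assume the data $(p_h^n,\disp_h^n)$ at level $n$ are known. Then \eqref{eq:disc-press}--\eqref{eq:disc-stress} is a \emph{square} linear system for the triple $(p_h^{n+1},\darcy_h^{n+1},\disp_h^{n+1})\in Q_h\times\Vdarcy_h\times\Vdisp_h$: the functionals $\mathcal R_p^{n+1}$ and $\mathcal R_u^{n+1}$ depend only on the prescribed data at $t^{n+1}$ and on the known $(p_h^n,\disp_h^n)$, so the unknown appears linearly on the left-hand side, tested over a space of the same dimension. Over a finite-dimensional space such a system is solvable for every right-hand side if and only if it is injective, i.e.\ if and only if its homogeneous counterpart has only the trivial solution. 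Dropping all right-hand sides and multiplying \eqref{eq:disc-press} by $\Delta t$, the homogeneous system is exactly \eqref{eq:uniqueness1}--\eqref{eq:uniqueness3} with $\sigma=1/\Delta t$. Since $\Delta t>0$, Lemma~\ref{lemma:uniqueness} applies and forces $(p_h^{n+1},\darcy_h^{n+1},\disp_h^{n+1})={\bf 0}$. Hence the step $n\to n+1$ is uniquely solvable, and the induction closes.

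There is no substantial obstacle here: the only point worth highlighting is the identification of the per-timestep left-hand side with an instance of the matrix pencil $\sigma\bfE+\bfA$ (with $\bfE$ the time-derivative part and $\sigma=1/\Delta t$) whose invertibility for \emph{every} $\sigma>0$ is precisely what Lemma~\ref{lemma:uniqueness} established. The finite-dimensional Fredholm alternative then upgrades that uniqueness statement to the existence statement required here, and the two together give existence and uniqueness of the full discrete trajectory $\{(p_h^n,\darcy_h^n,\disp_h^n)\}_{n\ge 0}$.
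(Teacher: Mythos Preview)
Your proof is correct and follows essentially the same approach as the paper: handle the initial conditions, then use induction on $n$, observing that the per-step system is square linear and finite-dimensional so existence reduces to uniqueness, which is exactly Lemma~\ref{lemma:uniqueness} with $\sigma=1/\Delta t$. You supply a bit more detail on the well-posedness of the initial data (Riesz/Lax--Milgram via coercivity of $a_h$), but the core argument is identical.
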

\begin{proof}
  The proof follows closely the proof for existence and uniqueness in
  the semi-discrete section.  First, we note that the discrete initial
  conditions are the same as~\eqref{eq:initialdisp} and thus
  compatible with the momentum equation~\eqref{eq:disc-stress} at
  $t^0$.

  Assume now the solution
  at time $t_n$, $n\geq 0$ has been computed.  Since the problem
  (\ref{eq:disc}a--c) is linear and finite dimensional, it
  suffices to show uniqueness. Thus, assume the right hand side in
  (\ref{eq:disc}a--c)is zero. Then, we have the situation of
  Lemma~\ref{lemma:uniqueness} with $\sigma=1/\Delta t$ in
  equations~(\ref{eq:uniqueness}a--c). Thus, $p^{n+1}$,
  $\darcy^{n+1}$, and $\disp^{n+1}$ are well-defined.
\end{proof}


\begin{theorem}
  Let Assumption~\ref{ass:hdiv} hold. Then, 
there is a constant $C$ independent of $h, \mu, \lambda, \biotwillis , \storage$ such that
for all $m\geq 1$
\begin{align}
  \storage \Vert p_h^m-p(t^m)\Vert_\Omega^2
  &\leq C  h^{2k+2} \left(\mathcal M_h^2 + \storage \Vert p(t^m)\Vert_{H^{k+1}(\Omega)}^2\right)
    +  C \Delta t^2 \mathcal M_t^2,
  \\
  \mu \Vert \disp_h^{m}-\disp(t^m)\Vert_{1,h}^2
  &\leq C h^{2k+2} \mathcal M_h^2
    +  C \Delta t^2 \mathcal M_t^2
    +\mu C h^{2k}\Vert \disp(t^m)\Vert_{H^{k+1}(\Omega)}^2,
  \\
    \lambda \Vert \div (\disp_h^{m}-\disp(t^m))\Vert_\Omega^2
    &\leq C h^{2k+2} \mathcal M_h^2
      +  C \Delta t^2 \mathcal M_t^2
      +\lambda C_{\lambda,\mu} h^{2k+2} \Vert \nabla \cdot \disp(t^m)\Vert_{H^{k+1}(\Omega)}^2, 
\end{align}
\begin{eqnarray}
\Delta t \sum_{n=0}^{m-1}\Vert K^{-1/2}(\darcy_h^{n+1}-\darcy(t^{n+1}))\Vert_\Omega^2
\leq C h^{2k+2} \mathcal M_h^2
+  C \Delta t^2 \mathcal M_t^2.
\end{eqnarray}
where
\begin{align*}
  \mathcal M_h^2 &= \biotwillis ^2 \Vert \partial_t \disp\Vert_{L^2(0,T;H^{k+1}(\Omega))}^2
  + \Vert \darcy\Vert_{L^2(0,T;H^{k+1}(\Omega))}^2
  \\
  \mathcal M_t^2 &= \storage^2 \Vert p_{tt}\Vert_{L^\infty(0,T;L^2(\Omega))}^2 
                   +\biotwillis ^2\Vert \disp_{tt}\Vert_{L^\infty(0,T;L^2(\Omega))}^2
\end{align*}
\end{theorem}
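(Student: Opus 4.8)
The plan is to repeat the semi-discrete argument of Section~\ref{sec:proof1} with the time derivative $\partial_t$ replaced by the backward difference quotient $\delta w^{n+1}:=\tfrac1{\Delta t}(w^{n+1}-w^{n})$, the new ingredient being the control of the resulting time-truncation terms. At each level $t^n$ I would use the same three projections as before: $\tilde\disp^n:=\tilde\disp(t^n)$ from \eqref{eq:wproj}, the Fortin interpolant $\tilde\darcy^n:=\pi_h\darcy(t^n)$, and the $L^2$-projection $\tilde p^n:=\tilde p(t^n)$ from \eqref{eq:L2proj}, and split $p_h^n-p(t^n)=(p_h^n-\tilde p^n)+(\tilde p^n-p(t^n))$, and likewise for $\darcy$ and $\disp$. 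By \eqref{eq:discIC1}--\eqref{eq:discIC2}, $p_h^0=\tilde p^0$ and $\disp_h^0=\tilde\disp^0$, so the discrete parts of the errors vanish at $n=0$. Subtracting the exact equations \eqref{eq:pdemass}--\eqref{eq:momentum} tested against $Q_h\times\Vdarcy_h\times\Vdisp_h$ at $t^{n+1}$ (the spatial discretization being consistent as in the consistency lemma) from \eqref{eq:disc-press}--\eqref{eq:disc-stress}, I obtain the discrete error system; compared with \eqref{eq:conterror1}--\eqref{eq:conterror3}, the only genuinely new term is the mass-equation consistency error $\delta(\storage p+\biotwillis\div\disp)(t^{n+1})-\partial_t(\storage p+\biotwillis\div\disp)(t^{n+1})$, whose $L^2(\Omega)$-norm is bounded, by Taylor's theorem with integral remainder, by $\tfrac{\Delta t}{2}\sup_{[t^n,t^{n+1}]}(\storage\|p_{tt}\|_\Omega+\biotwillis\|\div\disp_{tt}\|_\Omega)$.

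Next I would test the mass-, flow-, and momentum-error equations with $q=p_h^{n+1}-\tilde p^{n+1}$, $\darcytest=\darcy_h^{n+1}-\tilde\darcy^{n+1}$, and $\disptest=\delta(\disp_h-\tilde\disp)^{n+1}$ respectively, and add. Exactly the cancellations of Section~\ref{sec:proof1} occur: $a_h(\disp(t^{n+1})-\tilde\disp^{n+1},\disptest)=0$ by \eqref{eq:wproj}; $(\div(\darcy(t^{n+1})-\tilde\darcy^{n+1}),q)=0$ by \eqref{eq:prop1}; the terms $(\tilde p^{n+1}-p(t^{n+1}),\div\darcytest)$, $\biotwillis(\tilde p^{n+1}-p(t^{n+1}),\div\disptest)$ and $\storage(\delta(\tilde p-p)^{n+1},q)$ vanish by \eqref{eq:L2proj} together with \eqref{eq:pressurespace}--\eqref{eq:Vhpressurecond}; and the mass/flow cross terms $\pm(\div(\darcy_h^{n+1}-\tilde\darcy^{n+1}),p_h^{n+1}-\tilde p^{n+1})$ as well as the two $\biotwillis$-coupling terms cancel pairwise. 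For the time-difference quadratics I would invoke the identity $2(a^{n+1}-a^n)a^{n+1}=|a^{n+1}|^2-|a^n|^2+|a^{n+1}-a^n|^2\ge|a^{n+1}|^2-|a^n|^2$ for the $\storage\|\cdot\|_\Omega^2$ and $\lambda\|\div\cdot\|_\Omega^2$ contributions, and its symmetric-bilinear-form analogue for $\mu\,d_h$, discarding the nonnegative increment by Lemma~\ref{lem:coerc}. After multiplying by $\Delta t$ this yields a per-step inequality whose left-hand side is the increment of $\tfrac{\storage}{2}\|p_h^{n+1}-\tilde p^{n+1}\|_\Omega^2+\tfrac{\mu}{2}d_h(\disp_h^{n+1}-\tilde\disp^{n+1},\disp_h^{n+1}-\tilde\disp^{n+1})+\tfrac{\lambda}{2}\|\div(\disp_h^{n+1}-\tilde\disp^{n+1})\|_\Omega^2$ plus $\Delta t\|K^{-1/2}(\darcy_h^{n+1}-\tilde\darcy^{n+1})\|_\Omega^2$, and whose right-hand side is $\Delta t$ times the three terms $(K^{-1}(\darcy(t^{n+1})-\tilde\darcy^{n+1}),\darcy_h^{n+1}-\tilde\darcy^{n+1})$, $\biotwillis(\delta\div(\disp-\tilde\disp)^{n+1},p_h^{n+1}-\tilde p^{n+1})$, and the time-truncation term paired with $p_h^{n+1}-\tilde p^{n+1}$.

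Finally I would bound the right-hand side and sum over $n=0,\dots,m-1$. The Darcy term is handled by Young's inequality and the Fortin estimate, producing $\tfrac14\|K^{-1/2}(\darcy_h^{n+1}-\tilde\darcy^{n+1})\|_\Omega^2+Ch^{2k+2}|\darcy(t^{n+1})|_{H^{k+1}(\Omega)}^2$. The two terms paired with $p_h^{n+1}-\tilde p^{n+1}$ I would route through the discrete analogue of Lemma~\ref{lem:pressure} --- proved verbatim by testing the flow-error equation with an inf-sup function realizing $\div\darcytest=p_h^{n+1}-\tilde p^{n+1}$ and using \eqref{eq:L2proj}, \eqref{eq:pressurespace}, \eqref{eq:inf-sup1} --- which gives $\|p_h^{n+1}-\tilde p^{n+1}\|_\Omega\le C\|\darcy_h^{n+1}-\darcy(t^{n+1})\|_\Omega\le C\|\darcy_h^{n+1}-\tilde\darcy^{n+1}\|_\Omega+Ch^{k+1}|\darcy(t^{n+1})|_{H^{k+1}(\Omega)}$, so that Young's inequality absorbs the $\|\darcy_h^{n+1}-\tilde\darcy^{n+1}\|_\Omega^2$ contribution on the left. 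Summing, the telescoping left-hand side collapses to its value at $t^m$ (the $t^0$ value vanishes, the increment quadratics were discarded), and Lemma~\ref{lem:coerc} turns $\mu\,d_h$ into $\kappa\mu\|\disp_h^m-\tilde\disp^m\|_{1,h}^2$. On the right, Cauchy--Schwarz in time converts $\Delta t\sum_n\|\delta\div(\disp-\tilde\disp)^{n+1}\|_\Omega^2$ into $\int_0^{t^m}\|\div\partial_t(\disp-\tilde\disp)\|_\Omega^2\,ds$, which Assumption~\ref{ass:hdiv} via \eqref{eq:elasticity-hdiv-time} bounds by $Ch^{2k+2}\mathcal M_h^2$ (the sole use of the assumption, hence its appearance as a hypothesis), while $\Delta t\sum_n(\text{time-truncation})^2\le C\Delta t^2\mathcal M_t^2$ and $\Delta t\sum_n h^{2k+2}|\darcy(t^{n+1})|_{H^{k+1}(\Omega)}^2\le Ch^{2k+2}\|\darcy\|_{L^2(0,T;H^{k+1}(\Omega))}^2\le Ch^{2k+2}\mathcal M_h^2$. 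A triangle inequality with Proposition~\ref{thm:proj}, the $L^2$-projection estimate for $p$, and Assumption~\ref{ass:hdiv} then produces the four stated estimates; when $\storage=0$ the pointwise pressure estimate is vacuous and is replaced, as in the semi-discrete case, by the discrete $\ell^2(0,T;L^2(\Omega))$ bound $\Delta t\sum_n\|p_h^{n+1}-\tilde p^{n+1}\|_\Omega^2\le C\Delta t\sum_n\|\darcy_h^{n+1}-\darcy(t^{n+1})\|_\Omega^2$ coming from the discrete Lemma~\ref{lem:pressure} and the fourth estimate.

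\textbf{Main obstacle.} The delicate point is the coupling term $\biotwillis(\delta\div(\disp-\tilde\disp)^{n+1},p_h^{n+1}-\tilde p^{n+1})$: it cannot be absorbed directly because for $\storage=0$ the left-hand side carries no free $\|p_h^{n+1}-\tilde p^{n+1}\|_\Omega^2$, so it must be re-expressed through the inf-sup / Lemma~\ref{lem:pressure} bound onto the Darcy error, and extracting the optimal power $h^{2k+2}$ from it is precisely what forces Assumption~\ref{ass:hdiv}. The secondary, purely bookkeeping, difficulty is keeping the time-truncation remainder in integral-remainder form so that its discrete $\ell^2(L^2)$ norm is genuinely $O(\Delta t)$, contributing only the $\Delta t^2\mathcal M_t^2$ term, and verifying that no discrete Grönwall inequality is needed since every occurrence of a discrete error on the right-hand side is absorbable into the energy quantities on the left.
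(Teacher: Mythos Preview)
Your proposal is correct and follows essentially the same route as the paper: the same three projections $(\tilde\disp^n,\tilde\darcy^n,\tilde p^n)$, the same test functions $q=\chi_p^{n+1}$, $\darcytest=\bfchi_\darcy^{n+1}$, $\disptest=\delta\bfchi_\disp^{n+1}$, the same list of cancellations, the use of the discrete version of Lemma~\ref{lem:pressure} to route $\|\chi_p^{n+1}\|_\Omega$ through the Darcy error (avoiding Gr\"onwall), and the invocation of Assumption~\ref{ass:hdiv} precisely for the term $\biotwillis\,\delta\div(\disp-\tilde\disp)^{n+1}$. The only cosmetic differences are that the paper writes the Taylor remainder in pointwise form $\rho_{p,n+1},\rho_{\disp,n+1}$ rather than integral-remainder form, and handles $\Delta t\sum_n\|\delta\div\bfeta_\disp^{n+1}\|_\Omega^2$ via a mean-value point $t^{\ast,n}$ instead of Cauchy--Schwarz in time; both routes give the same bound.
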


Note that the theorem holds even without Assumption~\eqref{ass:hdiv}, but with reduced
convergence orders for $\darcy$ and $p$, similarly to 
Theorem~\ref{thm:conv}.

\begin{proof}
Error analysis follows closely the one at the continuous-in-time level. 
We can choose $\tilde{\darcy}^{n}$ such that
\[
\tilde{\darcy}^{n} = \pi_h \darcy(t^n), \quad n\geq 1
\]
%
We also can choose $\tilde{\disp}^{n}$ such that
\begin{equation}\label{eq:wprojdisc}
a_h(\tilde{\disp}^n,\disptest)
=  a_h(\disp(t^n),\disptest),
\quad\forall \disptest\in\Vdisp_h,\quad \forall n\geq 0
\end{equation}
Using Proposition~\ref{thm:proj} and Assumption~\ref{ass:hdiv}, we
have the following a priori error bounds, for any $n\geq 0$:
\begin{equation}
\Vert \tilde{\disp}^n -\disp(t^n)\Vert_{1,h}^2
\leq C h^{2k} \Vert \disp(t^n)\Vert_{H^{k+1}(\Omega)}^2,
\label{eq:discproj-energyn}
\end{equation}
\begin{equation}
\Vert \tilde{\disp}^n - \disp(t^n)\Vert_{\Omega}^2 + \Vert\div (\tilde{\disp}^n-\disp(t^n))\Vert_{\Omega}^2
\leq C_{\mu,\lambda} h^{2k+2} \Vert \disp(t^n)\Vert_{H^{k+1}(\Omega)}^2.
\label{eq:discproj-ltwon}
\end{equation}

We can also choose $\tilde{p}$ to be the $L^2$ projection of $p$ in $Q_h$:
\begin{equation}\label{eq:disL2proj}
(p(t^{n})-\tilde{p}^{n},q_h) = 0,\quad \forall q_h\in Q_h, \quad \forall n\geq 0.
\end{equation}
We decompose the errors as follows:
\begin{eqnarray*}
\darcy_h^n-\darcy(t^n) = \bfchi_\darcy^n - \bfeta_\darcy^n,\quad \bfchi_\darcy^n = \darcy_h^n-\tilde{\darcy}^n,\quad \bfeta_\darcy^n = \darcy(t^n) - \tilde{\darcy}^n,
\\
\disp_h^n-\disp(t^n) = \bfchi_\disp^n - \bfeta_\disp^n,\quad \bfchi_\disp^n = \disp_h^n-\tilde{\disp}^n,\quad \bfeta_\disp^n = \disp(t^n) - \tilde{\disp}^n,
\\
p_h^n-p(t^n) = \chi_p^n - \eta_p^n,\quad \chi_p^n = p_h^n-\tilde{p}^n,\quad \eta_p^n = p(t^n) - \tilde{p}^n.
\end{eqnarray*}
Using Taylor approximation, we have
\[
\frac{p(t^{n+1})-p(t^n)}{\Delta t} = \frac{\partial p}{\partial t} (t^{n+1})
+\Delta t \rho_{p,n+1},
\]
and
\[
\frac{\disp(t^{n+1})-\disp(t^n)}{\Delta t} = \frac{\partial \disp}{\partial t} (t^{n+1})
+\Delta t \rho_{\disp,n+1}.
\]
with
\begin{equation}
\Vert \rho_{p,n+1}\Vert_\Omega \leq C \Vert p_{tt}\Vert_{L^\infty(t_n,t_{n+1};L^2(\Omega))},\quad
\Vert \rho_{\disp,n+1}\Vert_\Omega \leq C \Vert \disp_{tt}\Vert_{L^\infty(t_n,t_{n+1};L^2(\Omega))}
\label{eq:taylorbounds}
\end{equation}

Error equations become:
\begin{eqnarray}
(\frac{1}{\Delta t} \left(\storage (\chi_p^{n+1}-\chi_p^n)+\biotwillis \div(\bfchi_\disp^{n+1}-\bfchi_\disp^n)\right),q)
+(\div \bfchi_\darcy^{n+1},q) = 
\nonumber
\\
(\frac{1}{\Delta t} \left(\storage (\eta_p^{n+1}-\eta_p^n)+\biotwillis \div(\bfeta_\disp^{n+1}-\bfeta_\disp^n)\right),q)_\Omega 
+(\div \bfeta_\darcy^{n+1},q) 
\nonumber
\\
+\Delta t (\storage \rho_{p,n+1}+\biotwillis \rho_{\disp,n+1},q),
\quad \forall q\in Q_h,
\label{eq:errordisc1}
\\
(K^{-1}\bfchi_\darcy^{n+1},\darcytest)- (\chi_p^{n+1},\div \darcytest) 
=
(K^{-1}\bfeta_\darcy^{n+1},\darcytest)- (\eta_p^{n+1},\div \darcytest),
\quad \forall\darcytest\in\Vdarcy_h,
\label{eq:errordisc2}
\\
a_h(\bfchi_\disp^{n+1},\disptest)
-(\biotwillis  \chi_p^{n+1}, \div \disptest)
= 
a_h(\bfeta_\disp^{n+1},\disptest)
- (\biotwillis  \eta_p^{n+1}, \div \disptest),
\quad \forall \disptest\in\Vdisp_h.
\label{eq:errordisc3}
\end{eqnarray}
We choose $q = \chi_p^{n+1}$ in \eqref{eq:errordisc1} and $\darcytest = \bfchi_\darcy^{n+1}$ in 
\eqref{eq:errordisc2}, and add the two resulting equations:
\begin{eqnarray}
(\frac{1}{\Delta t} \left(\storage (\chi_p^{n+1}-\chi_p^n)+\biotwillis \div(\bfchi_\disp^{n+1}-\bfchi_\disp^n)\right),\chi_p^{n+1})
+\Vert K^{-1/2}\bfchi_\darcy^{n+1}\Vert_\Omega^2
\nonumber\\
=(\frac{1}{\Delta t} \left(\storage (\eta_p^{n+1}-\eta_p^n)+\biotwillis \div(\bfeta_\disp^{n+1}-\bfeta_\disp^n)\right),\chi_p^{n+1})_\Omega 
+(\div \bfeta_\darcy^{n+1},\chi_p^{n+1}) 
\nonumber\\
+
(K^{-1}\bfeta_\darcy^{n+1},\bfchi_\darcy^{n+1})- (\eta_p^{n+1},\div \bfchi_\darcy^{n+1}) 
+\Delta t (\storage\rho_{p,n+1}+\biotwillis \rho_{\disp,n+1},\chi_p^{n+1}).
\label{eq:eq1eq2}
\end{eqnarray}
Next we select the test function $\disptest$ in \eqref{eq:errordisc3}
\[
\disptest = \frac{1}{\Delta t} (\bfchi_\disp^{n+1}-\bfchi_\disp^n),
\]
and add the resulting equation to \eqref{eq:eq1eq2}:
\begin{eqnarray}
(\frac{1}{\Delta t} \left(\storage (\chi_p^{n+1}-\chi_p^n)\right),\chi_p^{n+1})
&+&\Vert K^{-1/2}\bfchi_\darcy^{n+1}\Vert_\Omega^2
+\frac{1}{\Delta t}  a_h(\bfchi_\disp^{n+1},\bfchi_\disp^{n+1}-\bfchi_\disp^n) 
\nonumber\\
&=&(\frac{1}{\Delta t} \storage (\eta_p^{n+1}-\eta_p^n), \chi_p^{n+1})
+(\frac{1}{\Delta t} \biotwillis \div(\bfeta_\disp^{n+1}-\bfeta_\disp^n),\chi_p^{n+1}) 
\nonumber\\
+(\div \bfeta_\darcy^{n+1},\chi_p^{n+1}) 
&+&
(K^{-1}\bfeta_\darcy^{n+1},\bfchi_\darcy^{n+1})- (\eta_p^{n+1},\div \bfchi_\darcy^{n+1}) 
+\Delta t (\storage\rho_{p,n+1}+\biotwillis \rho_{\disp,n+1},\chi_p^{n+1})
\nonumber\\
&+&\frac{1}{\Delta t}  a_h(\bfeta_\disp^{n+1},\bfchi_\disp^{n+1}-\bfchi_\disp^n) 
- (\biotwillis  \eta_p^{n+1}, \div \frac{1}{\Delta t} (\bfchi_\disp^{n+1}-\bfchi_\disp^n))\nonumber\\
&=& T_1+\dots + T_8.
\label{eq:eq3eq4}
\end{eqnarray}
Because of \eqref{eq:pressurespace} and the definition of the $L^2$ projection (see \eqref{eq:disL2proj}), the  terms
$T_1, T_5$ and $T_8$ vanish. Because of \eqref{eq:prop1}, the term $T_3$ is zero. Finally, because of  \eqref{eq:wprojdisc}, 
the term $T_7$ also vanishes.
Therefore \eqref{eq:eq3eq4} simplifies to:
\begin{eqnarray}
(\frac{1}{\Delta t} \left(\storage (\chi_p^{n+1}-\chi_p^n)\right),\chi_p^{n+1})
+\Vert K^{-1/2}\bfchi_\darcy^{n+1}\Vert_\Omega^2
+\frac{1}{\Delta t}  a_h(\bfchi_\disp^{n+1},\bfchi_\disp^{n+1}-\bfchi_\disp^n) 
\nonumber\\
=\frac{\biotwillis}{\Delta t} (\div(\bfeta_\disp^{n+1}-\bfeta_\disp^n),\chi_p^{n+1})
+
(K^{-1}\bfeta_\darcy^{n+1},\bfchi_\darcy^{n+1})
+\Delta t (\storage\rho_{p,n+1}+\biotwillis \rho_{\disp,n+1},\chi_p^{n+1}).
\end{eqnarray}
Lemma~\ref{lem:pressure} is valid at the discrete level:
\[
\Vert p_h^n-\tilde{p}^n\Vert_\Omega \leq C \Vert \darcy_h^n-\darcy(t^n)\Vert_\Omega,\quad
\forall n \geq 1.
\]
This means that
\[
\Vert \chi_p^n \Vert_\Omega \leq C (\Vert \bfchi_\darcy^n\Vert_\Omega + \Vert \bfeta_\darcy^n\Vert_\Omega),\quad
\forall n\geq 1.
\]
Therefore this implies
\begin{eqnarray*}
\frac{\storage}{2\Delta t} (\Vert \chi_p^{n+1}\Vert_\Omega^2-\Vert \chi_p^n\Vert_\Omega^2)
+\frac12 \Vert K^{-1/2}\bfchi_\darcy^{n+1}\Vert_\Omega^2
+\frac{\kappa}{2\Delta t} \mu (\Vert \bfchi_\disp^{n+1}\Vert_{1,h}^2-\Vert \bfchi_\disp^n\Vert_{1,h}^2) 
\nonumber\\
+\lambda \frac{1}{2\Delta t} (\Vert \div \bfchi_\disp^{n+1}\Vert_\Omega^2 - \Vert \div \bfchi_\disp^n\Vert_\Omega^2)
\nonumber\\
\leq  C \Vert \bfeta_\darcy^{n+1}\Vert_\Omega^2 
+ C \frac{\biotwillis ^2}{\Delta t^2} \Vert \div(\bfeta_\disp^{n+1}-\bfeta_\disp^n)\Vert_\Omega^2
+ C  \Delta t^2 \Vert \storage \rho_{p,n+1}+\biotwillis \rho_{\disp,n+1}\Vert_\Omega^2.
\end{eqnarray*}
We multiply the above inequality by $2\Delta t$,  sum from $n=0$ to $n=m-1$ and
remark that $\chi_p^0 = 0$ and $\bfchi_\disp^0={\bf 0}$:
\begin{eqnarray}
\storage \Vert \chi_p^{m}\Vert^2
+\Delta t \sum_{n=0}^{m-1}\Vert K^{-1/2}\bfchi_\darcy^{n+1}\Vert^2
+\kappa \mu \Vert \bfchi_\disp^{m}\Vert_{1,h}^2
+\lambda \Vert \div \bfchi_\disp^{m}\Vert_\Omega^2
\nonumber\\
\leq  
  C \biotwillis ^2\Delta t\sum_{n=0}^{m-1} \Vert \frac{1}{\Delta t}\div(\bfeta_\disp^{n+1}-\bfeta_\disp^n)\Vert_\Omega^2
+ C \Delta t\sum_{n=0}^{m-1} \Vert \bfeta_\darcy^{n+1}\Vert_\Omega^2
\nonumber\\
+  \Delta t^3\sum_{n=0}^{m-1} \Vert \storage\rho_{p,n+1}+\biotwillis \rho_{\disp,n+1}\Vert_\Omega^2.
\end{eqnarray}
From the approximation bounds \eqref{eq:discproj-ltwon}, we have 
\[
\Delta t\sum_{n=0}^{m-1} \Vert \frac{1}{\Delta t}\div(\bfeta_\disp^{n+1}-\bfeta_\disp^n)\Vert_\Omega^2
\leq C h^{2k+2} \Delta t \sum_{n=0}^{m-1}\Vert \frac{\disp(t^{n+1})-\disp(t^n)}{\Delta t} \Vert_{H^{k+1}(\Omega)}^2
\]
\[
\leq C h^{2k+2} \Delta t \sum_{n=0}^{m-1}\Vert \frac{\partial\disp}{\partial t}(t^{\ast,n}) \Vert_{H^{k+1}(\Omega)}^2
\leq C h^{2k+2} \Vert \frac{\partial\disp}{\partial t} \Vert_{L^2(0,T;H^{k+1}(\Omega))}^2.
\]
Similarly we obtain
\[\Delta t\sum_{n=0}^{m-1} \Vert \bfeta_\darcy^{n+1}\Vert_\Omega^2
\leq C h^{2k+2} \Delta t \sum_{n=0}^{m-1} \Vert \darcy(t^{n+1})\Vert_{H^{k+1}(\Omega)}^2
\leq C h^{2k+2} \Vert \darcy\Vert_{L^2(0,T;H^{k+1}(\Omega))}^2.
\]
Finally using \eqref{eq:taylorbounds}, we obtain:
\begin{eqnarray}
\storage \Vert \chi_p^{m}\Vert_\Omega^2
+\Delta t \sum_{n=0}^{m-1}\Vert K^{-1/2}\bfchi_\darcy^{n+1}\Vert_\Omega^2
+\kappa \mu \Vert \bfchi_\disp^{m}\Vert_{1,h}^2
+\lambda \Vert \div \bfchi_\disp^{m}\Vert_\Omega^2
\nonumber\\
\leq C \biotwillis ^2 h^{2k+2} \Vert \frac{\partial\disp}{\partial t} \Vert_{L^2(0,T;H^{k+1}(\Omega))}^2
+ C h^{2k+2} \Vert \darcy\Vert_{L^2(0,T;H^{k+1}(\Omega))}^2
\nonumber\\
+  C \Delta t^2 (\storage^2 \Vert p_{tt}\Vert_{L^\infty(0,T;L^2(\Omega)}^2 
+\biotwillis ^2\Vert \disp_{tt}\Vert_{L^\infty(0,T;L^2(\Omega))}^2).
\end{eqnarray}
The final results are obtained by triangle inequalities and approximation bounds.
\end{proof}

\begin{remark}
Let $\tilde{f}_1$ denote the $L^2$ projection of $f_1$ onto $Q_h$. The discrete pressure and displacement satisfy the conservation property, pointwisely:
\[
    \frac{1}{\Delta t} \bigl(\storage p_h^{n+1}+\biotwillis \div\disp_h^{n+1}\bigr)
    -\frac{1}{\Delta t} \bigl(\storage p_h^{n}+\biotwillis \div\disp_h^{n}\bigr)
= \tilde{f}_1^{n+1}, \quad\forall x\in T,\quad\forall T\in\Th_h.
\]

\end{remark}

\section{Numerical experiments}
\label{sec:numer-exper}

For our numerical experiments, we follow the approach
in~\cite{BarryMercer1998} to construct an exact solution to
equations~\eqref{eq:pdemass}--\eqref{eq:momentum}. Differing from
their results, we construct smooth solutions in order to verify the
expected convergence orders. To this end, we let $\mu=1$ and
$K=\mathbf I$, which corresponds to the nondimensionalization
in~\cite{BarryMercer1998} and does not
restrict generality of our results. Furthermore, we consider only
incompressible fluids, that is, $\storage = 0$. Further, we take
$\biotwillis = 1$. We choose $\Omega = (0,1)^2$ with boundary conditions
\begin{gather}
  \left.
    \arraycolsep2pt
    \begin{matrix}
      \partial_n (\disp\cdot\bfn) &=&0 \\
       \disp\times \bfn &=&0 \\
      p &=&0
    \end{matrix}\;
  \right\} \text{ on } \partial\Omega.
\end{gather}
Thus, the deformation can only be in normal direction on each
boundary, and the pressure is prescribed. The seepage velocity at the boundary is free.  Let
$\phi(x,y) = \sin(2\pi x)\sin(2\pi y)$ and choose as right hand side
in~\eqref{eq:pdemass}
\begin{gather}
  \label{eq:f1-barry}
  f_1(x,y,t) = \phi(x,y) \sin(2\pi t).
\end{gather}
With the auxiliary function
\begin{gather}
  \psi(t) = \frac{1}{64 \pi^4+4\pi^2}
  \bigl(8\pi^2 \sin(2\pi t) - 2\pi \cos(2\pi t)
  + 2\pi e^{-8\pi^2 t} \bigr),
\end{gather}
we obtain the solutions
\begin{gather}
  \begin{split}
    p(x,y,t) & = \psi(t)\phi(x,y), \\
    \darcy(x,y,t) &= \psi(t) \nabla \phi(x,y), \\
    \disp(x,y,t) &= \frac{\psi(t)}{8\pi^2} \nabla \phi(x,y).
  \end{split}
\end{gather}

We discretize $\Omega$ by a sequence of  Cartesian meshes, such
that $\Th_0$ is the mesh consisting of the single square
$\Omega$. The mesh $\Th_{\ell}$ is defined recursively by
dividing every square of $\Th_{\ell-1}$ into four congruent
squares. Thus, $\Th_{\ell}$ consists of $4^\ell$ mesh cells with
sides of length $2^{-\ell}$. Figure~\ref{fig:warped} shows the
solution at time $t=0.5$ with considerably enlarged deformations and
seepage velocity arrows.

\begin{figure}[tp]
  \centering
  \includegraphics[width=\textwidth]{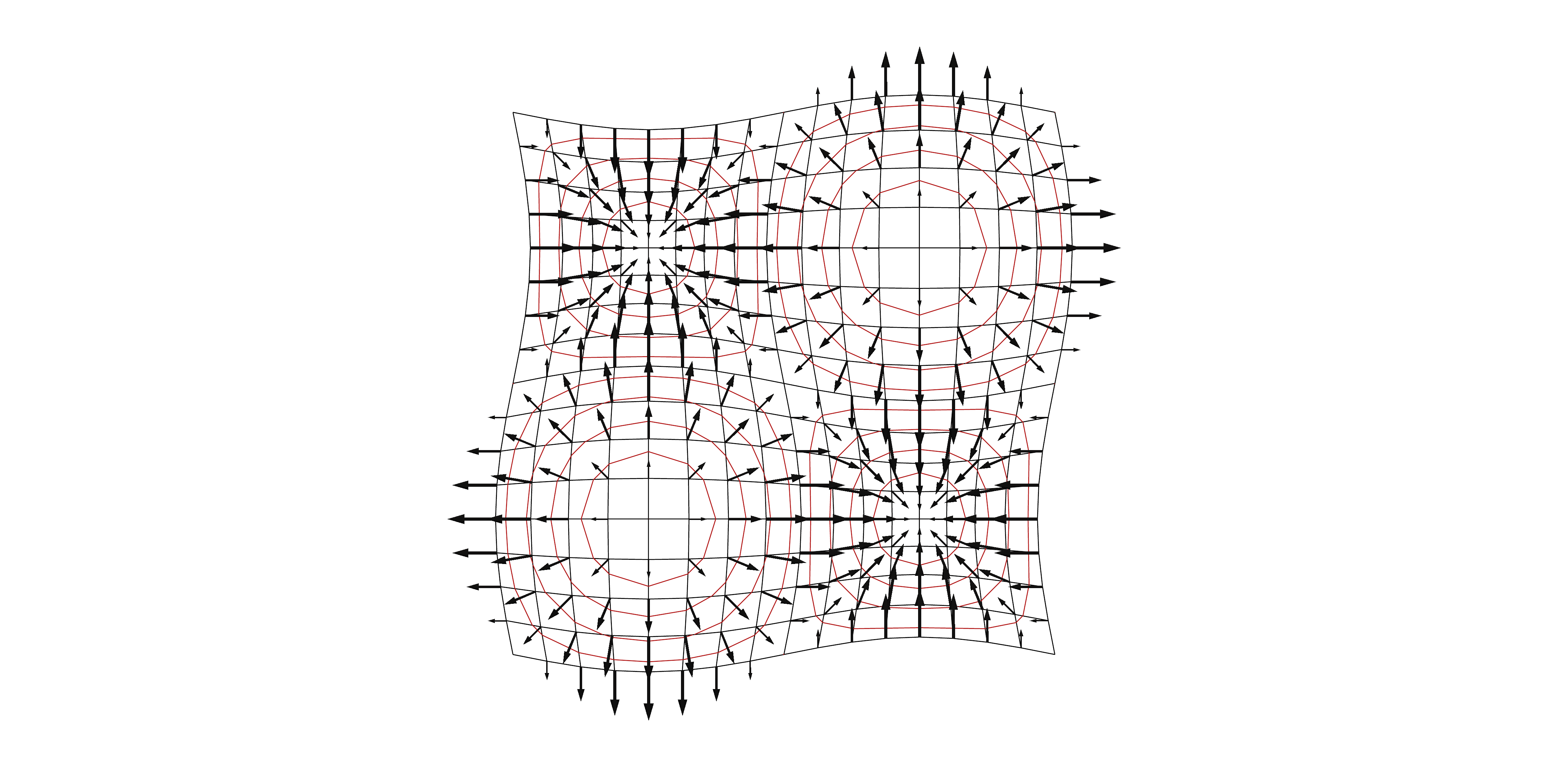}
  \caption{The seepage velocity $\darcy$ (arrows) and the pressure $p$ (isolines) on the mesh deformed by $\disp$ (arrows and deformations not in scale)}
  \label{fig:warped}
\end{figure}

In Figure~\ref{fig:errors-rt1q1}, we display different norms of the
errors of $\disp$, $\darcy$, and $p$, respectively. Note that all
$L^2$-errors as well as the quadratic errors of the divergences are of
second order, while the errors of the gradients are first order,
confirming our theoretical results  and the assumption on the divergence error, respectively.
In Figure~\ref{fig:errors-rt2q2}, we show the same results for elements
of one polynomial orser higher. The results exhibit again the expected
convergence orders.
\begin{figure}[tp]
  \centering
  \includegraphics[width=.4\textwidth]{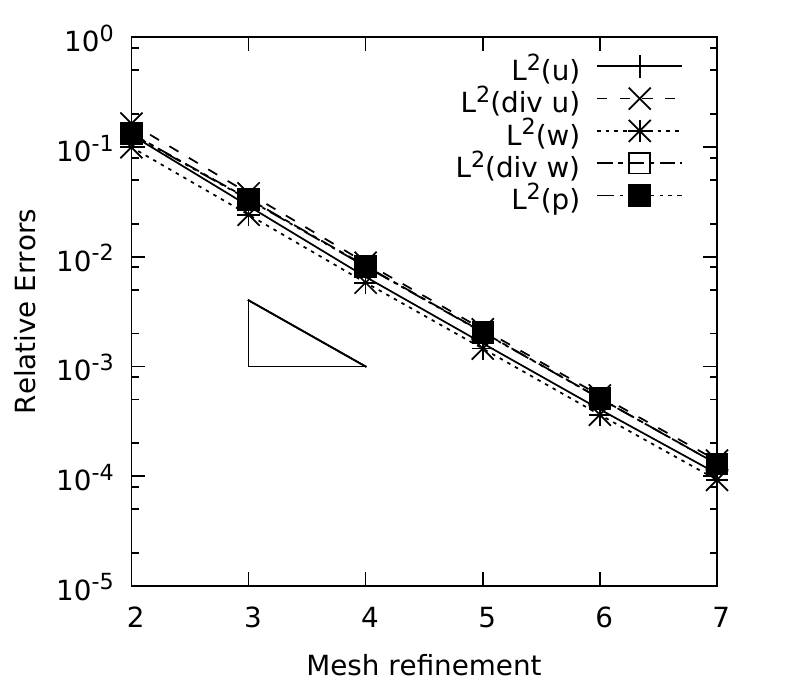}
  \includegraphics[width=.4\textwidth]{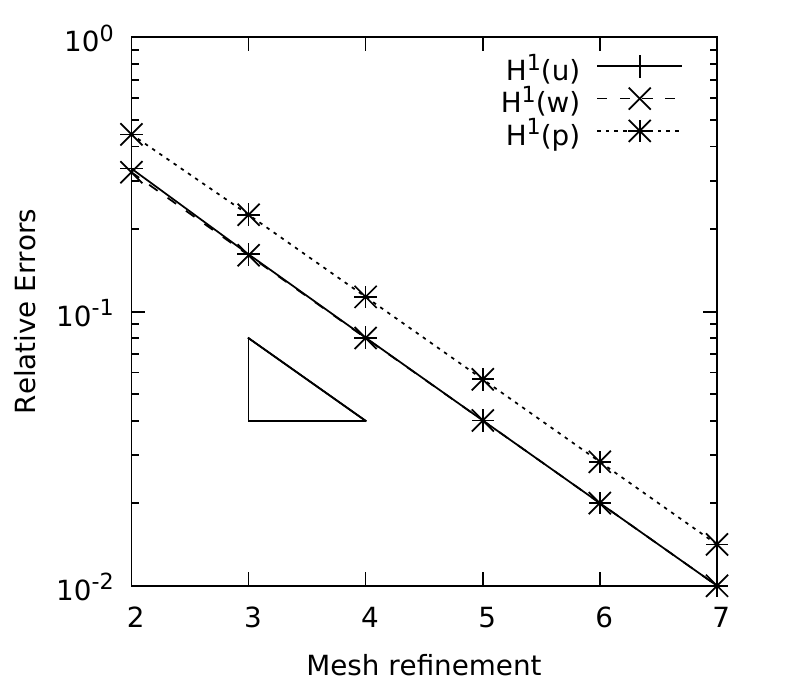}  
  \caption{Relative errors for $RT_1/Q_1$ elements. The triangle on the left indicates second order convergence, the one on the right first order.}
  \label{fig:errors-rt1q1}
\end{figure}
\begin{figure}[tp]
  \centering
  \includegraphics[width=.4\textwidth]{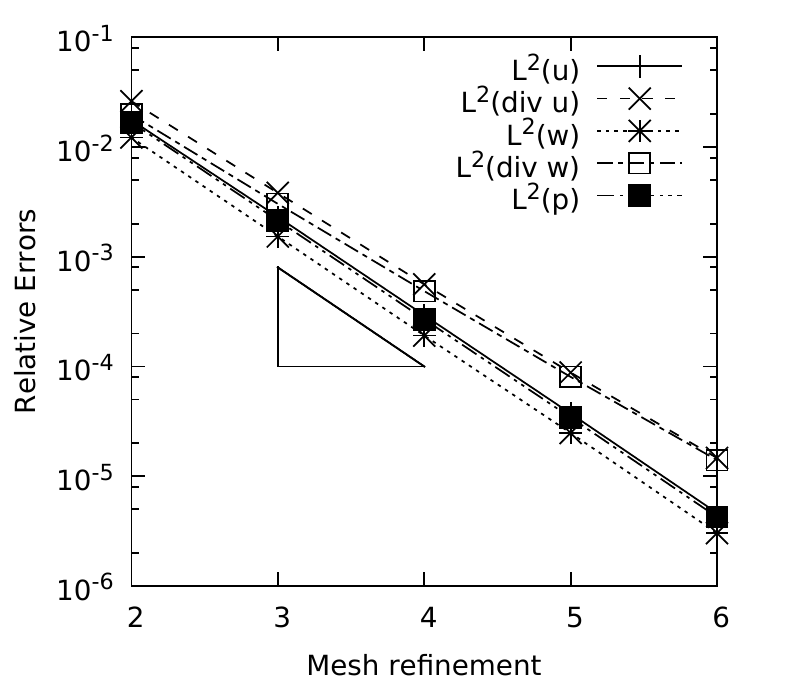}
  \includegraphics[width=.4\textwidth]{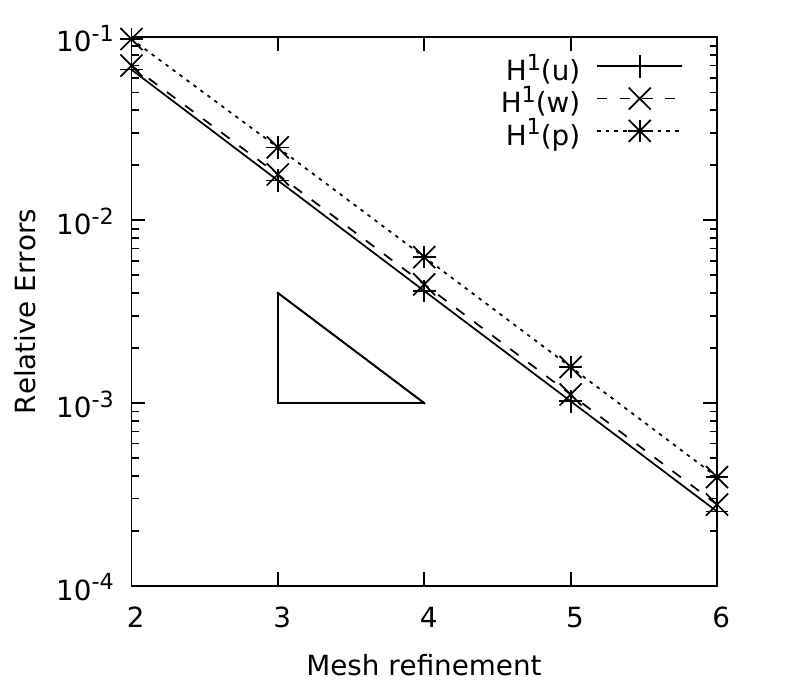}  
  \caption{Relative errors for $RT_2/Q_2$ elements. The triangle on the left indicates third order convergence, the one on the right second order.}
  \label{fig:errors-rt2q2}
\end{figure}
Details of the discretization and the time steps chosen can be found
in Table~\ref{tab:discretization}.  Due to the low accuracy of the
Euler scheme analyzed above, computations were performed with the
$\theta$-scheme, which reads for a general spatial operator $F$:
\begin{gather*}
  u_{n+1} + \theta \,\Delta t \,F(u_{n+1})  = u_n - (1-\theta) \,\Delta t\, F(u_n).
\end{gather*}
A value of $\theta=0.5$ yields the second-order Crank-Nicolson
method. We chose $\theta = 0.501$ such that the scheme is strongly
A-stable. While it is only first order, its error constant is much
smaller than for the backward Euler scheme. In any case, we chose time
steps sufficiently small such that further reduction did not improve
significant digits of the error.

\begin{table}[tp]
  \centering
  \begin{tabular}{|c|cc|cc|cc|}
    \hline
    \multicolumn{3}{|c|}{}
    &\multicolumn{2}{|c|}{$RT_1$}
    &\multicolumn{2}{|c|}{$RT_2$}
      \\\hline
    $\ell$ & $h$        & cells & $\Delta t$ & dofs  & $\Delta t$ & dofs \\\hline
    2 & \nicefrac14     & 16    & 0.08       & 352    & 0.02   & 768 \\
    3 & \nicefrac18     & 64    & 0.04       & 1344   & 0.006  & 2976\\
    4 & \nicefrac1{16}  & 256   & 0.02       & 5248   & 0.002  & 11712\\
    5 & \nicefrac1{32}  & 1024  & 0.01       & 20736  & 0.0007 & 46464\\
    6 & \nicefrac1{64}  & 4096  & 0.005      & 82432  & 0.0003 & 185088\\
    7 & \nicefrac1{128} & 16384 & 0.002      & 328704 & 0.0001 & 738816\\\hline
  \end{tabular}
  \caption{Additional data on the discretization.}
  \label{tab:discretization}
\end{table}

Finally, we verify the mass conservation of the method. Given $u_h(0)=0$,
exact mass conservation implies that at time $t>0$ there holds
\begin{gather*}
  \Delta m(t) := \storage p(t) + \biotwillis \div \disp(t)
  - \int_0^t \bigl[\div\darcy(s) - \tilde{f}_1(s) \bigr]\,ds = 0,
\end{gather*}
for the continuous in time scheme. Here, $\tilde{f}_1$ is the
$L^2$-projection of $f_1$ into the discrete pressure space $Q_h$.
Discretely in time, this identity still holds, if we replace the
integral by the quadrature rule consistent with the timestepping
scheme. In particular, the equality holds independent of approximation
quality, such that we test it on very coarse meshes and with coarse
time steps. In Table~\ref{tab:conservation}, we show results, where we
vary the parameters of the equation. In particular, $\storage \neq 0$
allows for compressible fluids and $\biotwillis\neq 1$ for some slack
in the mass balance between solid and fluid. Nevertheless, all norms
are within machine accuracy, confirming our claim.
\begin{table}
  \centering
  \begin{tabular}{|ccc|c|}
    \hline
    $\storage$ & $\biotwillis$ & $\lambda$ & $\lVert\Delta m(0.5)\rVert_{L^2(\Omega)}$
    \\\hline
    0 & 1 & 1 & 8.55e-17 \\
    0 & 0.9 & 1 & 7.36e-17\\
    0.1 & 0.9 & 1 & 7.66e-17\\
    0.1 & 0.9 & 1000 & 3.19e-14\\
    \hline
  \end{tabular}
  \caption{Verification of mass balance for various parameters. $h=\nicefrac18$,
    $\Delta t=\nicefrac1{10}$. Right hand side $f_1$ as in equation~\eqref{eq:f1-barry}.}
  \label{tab:conservation}
\end{table}

\section{Conclusions}

We presented a discretization scheme for Biot's consolidation
model which provides pointwise mass balance. It is based on a
superapproximation assumption on the divergence of the Hdiv-DG
discretization of the elasticity subproblem. The approximations of
displacement and seepage velocity, respectively, are of equal order.

\subsection*{Acknowledgements}
Computations in this article were produced using the deal.II
library~\cite{dealii84}.

\bibliography{../../../../totale,local}

\end{document}